\tikzstyle{vertex}=[circle ,draw, inner sep=0pt, minimum size=6pt]
\newcommand{\calB}{\mathcal{B}}
\newcommand{\ZZ}{\mathbb{Z}}
\newcommand{\kk}{\Bbbk}
\def\opn#1#2{\def#1{\operatorname{#2}}} 
\opn\conv{conv} \opn\mut{mut} \opn\GL{GL} \opn\cone{cone} \opn\ini{in} \opn\NF{NF} \opn\deg{deg}
\opn\Graph{Graph} \opn\sign{sign} \opn\mat{Mat} \opn\rank{rank} \opn\type{type} \opn\reg{reg} \opn\core{core}
\opn\indmat{im} \opn\Indeg{Indeg} \opn\star{star} \opn\link{link} \opn\Tor{Tor} \opn\MNF{MNF} \opn\dep{depth} \opn\pdim{pdim}
\newcommand{\gdd}{\Graph_{\dep,\dim}}
\newcommand{\gddcc}{\Graph_{\dep,\dim}^\mathrm{chordal}}
\newtheorem{thm}{Theorem}[section]
\newtheorem{cor}[thm]{Corollary}
\newtheorem{lem}[thm]{Lemma}
\newtheorem{prop}[thm]{Proposition}
\newtheorem{q}[thm]{Question}
\theoremstyle{definition}
\newtheorem{ex}[thm]{Example}
\theoremstyle{remark}
\newtheorem{rem}[thm]{Remark}
\begin{document}

\title{Behaviors of pairs of dimensions and depths of edge ideals}
\author{Akihiro Higashitani}
\author{Akane Kanno}
\author{Ryota Ueji}

\address[A. Higashitani]{Department of Pure and Applied Mathematics, Graduate School of Information Science and Technology, Osaka University, Suita, Osaka 565-0871, Japan}
\email{higashitani@ist.osaka-u.ac.jp}
\address[A. Kanno]{Department of Pure and Applied Mathematics, Graduate School of Information Science and Technology, Osaka University, Suita, Osaka 565-0871, Japan}
\email{u825139b@ecs.osaka-u.ac.jp}
\address[R. Ueji]{Department of Pure and Applied Mathematics, Graduate School of Information Science and Technology, Osaka University, Suita, Osaka 565-0871, Japan}
\email{u136547i@alumni.osaka-u.ac.jp}

\subjclass[2020]{
Primary: 13D02, 
Secondary: 13F55, 05E40, 05C70.} 
\keywords{Edge ideals, dimension, depth, projective dimension.}

\maketitle

\begin{abstract} 
Edge ideals of finite simple graphs $G$ on $n$ vertices are the ideals $I(G)$ of the polynomial ring $S$ in $n$ variables 
generated by the quadratic monomials associated with the edges of $G$. 
In this paper, we consider the possible pairs of dimensions and depths of $S/I(G)$ for connected graphs with a fixed number of vertices. 
We discuss such pairs in the case where dimension is relatively large. 
As a corollary, we completely determine the pairs for connected graphs with small number of vertices. 
We also study the possible pairs for connected chordal graphs. 
\end{abstract}

\section{Introduction}

Monomial ideals are one of the most well-studied objects in the area of combinatorial commutative algebra. 
Especially, edge ideals of graphs are of particular interest in this area. 
On the other hand, dimensions and depths are fundamental invariants on graded (or local) rings. 
The goal of this paper is the investigation of the behaviors of pairs of dimensions and depths for the quotient rings by edge ideals. 

\medskip

Throughout the present paper, we only treat finite simple graphs, so we omit ``finite simple''. 
Let $\Graph(n)$ denote the set of all connected graphs on the vertex set $[n]:=\{1,2,\ldots,n\}$. 
Let $S=\kk[x_1,\ldots,x_n]$ be a polynomial ring in $n$ variables over a field $\kk$. 
The {\em edge ideal} $I(G)$ of $G \in\Graph(n)$ is defined by $I(G)=( x_ix_j : \{i,j\} \in E(G)) \subset S$. 
Invariants on edge ideals have been well studied in combinatorial commutative algebra. 
Especially, the graded Betti numbers of the edge ideals of graphs have been considered in various viewpoints. 
Since the (Castelnuovo--Mumford) regularity and the projective dimension of edge ideals can be obtained from the information on Betti numbers, 
we can also obtain some characterizations or bounds on them. For example, see \cite{GV, HV, HKKMV, HKM, K, Kimura, Z}. 
Note that the depths of edge ideals and the projective dimensions are equivalent invariants in some sense (see Subsection~\ref{subsec:AB}). 
For more information on edge ideals, see, e.g., \cite[Section 9]{HerzogHibi}. 

In this paper, we focus on the relationship between dimensions and depths of edge ideals for connected graphs. 
The study of their relationship is initiated in \cite[Section 2]{HKKMV} as far as the authors know. 
Let us briefly recall what is obtained there. Given a positive integer $n$, let 
$$\gdd(n):=\left\{(\dep S/I(G), \dim S/I(G)) : G \in \Graph(n)\right\},$$
where $\dim G$ (resp. $\dep G$) denotes the dimension (resp. depth) of $S/I(G)$. 
On the other hand, let 
$$C^*(n):=\left\{(a,b) \in \ZZ^2 : 1 \leq a \leq b \leq n-1, \; a \leq b+1 - \left\lceil\frac{b}{n-b}\right\rceil\right\}.$$
Then the following is proved: 
\begin{thm}[{\cite[Theorem 2.8]{HKKMV}}]\label{thm:inc}
Let $n \geq 2$. Then we have $C^*(n) \subset \gdd(n)$. 
\end{thm}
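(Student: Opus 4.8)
The plan is to produce, for every $(a,b)\in C^*(n)$, a connected \emph{chordal} graph $G$ on $[n]$ with $\dim S/I(G)=b$ and $\dep S/I(G)=a$; this gives $C^*(n)\subset\gdd(n)$. Two standard dictionary entries do the conceptual work. First, $\dim S/I(G)=\alpha(G)$, the independence number, since the minimal primes of $I(G)$ are the minimal vertex covers of $G$. Second, when $G$ is chordal, $S/I(G)$ is sequentially Cohen--Macaulay, so $\pdim S/I(G)$ equals the big height of $I(G)$ (the maximum size of a minimal vertex cover); taking complements, $\dep S/I(G)=n-\pdim S/I(G)=i(G)$, the \emph{independent domination number}, i.e.\ the minimum size of a maximal independent set. (The inequality $\dep S/I(G)\le i(G)$ holds for every graph and is elementary, coming from $\pdim\ge\operatorname{bight}$; chordality gives equality, and also yields that $G$ is Cohen--Macaulay as soon as it is well-covered.) So the task becomes purely combinatorial: for each $(a,b)\in C^*(n)$, build a connected chordal graph on $n$ vertices with $\alpha(G)=b$ and $i(G)=a$.

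All of these graphs I would obtain from a complete graph by attaching pendant vertices (``whiskers''), splitting into regimes according to how $c:=n-b$ compares with $a$. If $a\ge c$ (which with $a\le b$ forces $b\ge n/2$): attach $\ell_1,\dots,\ell_c\ge 1$ pendants to the vertices of $K_c$ with $\sum_i\ell_i=b$ and $\max_i\ell_i=b+1-a$; a maximal independent set meets $K_c$ in zero vertices (hence equals the set of all $b$ pendants) or in one vertex $u_i$ (hence equals $u_i$ together with all pendants not at $u_i$), giving $\alpha(G)=b$ and $i(G)=b+1-\max_i\ell_i=a$, and such $(\ell_i)$ exists exactly when $c\le a$ and $\max_i\ell_i\ge\lceil b/c\rceil$, i.e.\ exactly when $(a,b)\in C^*(n)$ with $a\ge c$. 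If $a<c$ (and $b\ge 2$): take $K_{c+1}$, attach $b-a$ pendants to one vertex, one pendant to each of $a-1$ further vertices, and leave the remaining $c+1-a\ge 1$ vertices pendant-free; a similar small case analysis of maximal independent sets gives $\alpha(G)=b$, $i(G)=a$. Finally, the Cohen--Macaulay diagonal $a=b\le n/2$, the remaining part of $C^*(n)$: take cliques $C_1,\dots,C_b$ with $|C_i|\ge 2$ and $\sum_i|C_i|=n$, fix a simplicial vertex in each, and join the $C_i$ along a path through non-simplicial vertices; every maximal independent set meets each $C_i$ in exactly one vertex, so $G$ is chordal and well-covered, hence $S/I(G)$ is Cohen--Macaulay with $\dep S/I(G)=\dim S/I(G)=b$. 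A short check (using $\lceil b/c\rceil=1$ when $c>b$) confirms that these regimes exhaust $C^*(n)$; the case $b=1$ is handled directly by $K_n$.

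The main obstacle is not any single homological point but the bookkeeping in the first regime: the whisker multiplicities $\ell_i$ must be chosen so that $i(G)$ equals $a$ \emph{exactly}, and ``positive integers summing to $b$ with maximum $b+1-a$'' is feasible \emph{precisely} for $c\le a\le b+1-\lceil b/(n-b)\rceil$ --- so the ceiling in the definition of $C^*(n)$ is exactly the pigeonhole lower bound $\max_i\ell_i\ge\lceil b/c\rceil$, and matching it on the nose while the complementary ranges $a<c$ and $a=b$ are covered by the other two families is the delicate part. If one wishes to avoid invoking sequential Cohen--Macaulayness, the same graphs can be treated by induction on $n$ via the usual short exact sequences for edge ideals and the depth lemma, but the combinatorial core --- computing $i(G)$ and arranging for it to land on $a$ --- is unchanged.
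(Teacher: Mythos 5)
Your proposal is correct, and it in fact proves more than the quoted statement: Theorem~\ref{thm:inc} is imported from \cite[Theorem 2.8]{HKKMV} and is not reproved in this paper, whereas what you sketch is a self-contained proof of the stronger inclusion $C^*(n)\subset\gddcc(n)$, i.e.\ the harder half of Theorem~\ref{main2} established in Section~\ref{sec:main2}. Your homological dictionary agrees with the paper's: for chordal $G$, Kimura's formula (Theorem~\ref{thm:K2}) gives $\pdim G=\max\{|C|:C\text{ a minimal vertex cover}\}$, equivalently $\dep G=i(G)$, the minimum size of a maximal independent set, so the whole problem reduces to realizing $(i(G),\alpha(G))=(a,b)$ by a connected chordal graph. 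The constructions differ from the paper's. For $a+b\le n$ the paper's Proposition in Section~\ref{sec:main2} uses a clique of size $n-b$ with $a-1$ pendants plus a complete bipartite attachment of the remaining $b-a+1$ independent vertices, and computes the minimal vertex covers directly; your second family ($K_{c+1}$ with $b-a$ whiskers at one vertex and single whiskers at $a-1$ others) is a different graph with the same invariants. For $a+b>n$ the paper defers to the chordal graphs $G(m;s_1,\ldots,s_m)$ of \cite[Construction 2.5]{HKKMV}, while you give an explicit whiskered $K_{n-b}$ with multiplicities $\ell_i\ge 1$ summing to $b$ and maximum exactly $b+1-a$; your feasibility claim is right, the key inequality being $(c-1)(b+1-a)\ge c(b+1-a)-(b+1-a)\ge b-(b+1-a)=a-1$, which lets the remaining $a-1$ flowers be distributed once $c\le a$ and $b+1-a\ge\lceil b/c\rceil$, i.e.\ exactly on $C^*(n)\cap\{a\ge c\}$. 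One point you should make explicit: the second family also requires $a<b$ (for $a=b<c$ it produces $i(G)=b-1$), so the diagonal $a=b$ must be routed entirely to your third family of cliques joined along a path, where the designated simplicial vertices force every maximal independent set to meet every clique; with that caveat the three regimes do exhaust $C^*(n)$, and the argument is sound.
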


By taking this theorem into account, the following question naturally arises: 
\begin{q}\label{qqq}
Does the equality $C^*(n) = \gdd(n)$ hold? 
\end{q}
In \cite[Example 2.2]{HKKMV}, it is mentioned that $\gdd(n)$ is computed by using computer. 
As a consequence of the computations, we can verify that $C^*(n)=\gdd(n)$ holds when $n \leq 9$.

In this paper, we develop more theoretical considerations. 
The first main result of this paper is the following: 
\begin{thm}[{See Remark~\ref{rem:star}, Propositions~\ref{prop:n-2} and \ref{prop:n-3}}]\label{main1}
Let $(a,b) \in \gdd(n)$. If $b \geq n-3$, then $(a,b) \in C^*(n)$. 
\end{thm}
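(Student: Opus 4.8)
The plan is to prove the three-case statement by combining an upper bound on $\dep S/I(G)$ that depends only on $\dim S/I(G)$ with an explicit analysis of the extremal graphs when $\dim S/I(G) \in \{n-1, n-2, n-3\}$. First I would recall that $\dim S/I(G)$ equals $n$ minus the minimal size of a vertex cover of $G$, equivalently the maximum size of an independent set, so $b = n-1$ forces $G$ to be a star $K_{1,n-1}$ (the only connected graph with an independent set of size $n-1$), $b = n-2$ forces the minimal vertex cover to have size $2$, and $b = n-3$ forces it to have size $3$. The key structural input is that having a small vertex cover severely constrains $G$: when the vertex cover is $\{u,v\}$, every vertex of $G$ is adjacent to $u$ or $v$, and similarly for size-three covers. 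I would then use Theorem~\ref{thm:inc} (so that $C^*(n) \subset \gdd(n)$ is already known) together with a matching upper bound: the point is to show that for each such $G$, the pair $(\dep S/I(G), \dim S/I(G))$ satisfies the inequality $a \le b + 1 - \lceil b/(n-b) \rceil$ defining $C^*(n)$, i.e.\ that $\dep S/I(G)$ cannot exceed that threshold.

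The main work is the three propositions referenced in the statement. For $b = n-1$ (Remark~\ref{rem:star}), $G = K_{1,n-1}$ and one computes directly that $\dep S/I(K_{1,n-1}) = 1$ while the $C^*(n)$ bound at $b = n-1$ is $a \le (n-1) + 1 - \lceil (n-1)/1 \rceil = 1$, so equality holds and the point lies in $C^*(n)$. For $b = n-2$ (Proposition~\ref{prop:n-2}), I would enumerate the connected graphs with vertex-cover number $2$: writing the cover as $\{u,v\}$, the graph is determined by which of the $n-2$ remaining vertices attach to $u$ only, to $v$ only, or to both, plus whether the edge $uv$ is present; this is a small finite family up to isomorphism. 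For each, I would compute $\dep S/I(G)$ — these are essentially graphs that decompose as (possibly overlapping) stars, so one can use standard tools: the formula $\pdim S/I(G) = \mathrm{bigheight}$ in nice cases, Terai's/Hochster-type reasoning, or the behavior of depth under adding a leaf, and compare against $(n-2) + 1 - \lceil (n-2)/2 \rceil = n - 1 - \lceil n/2 \rceil + 1$. The analogous but more elaborate enumeration handles $b = n-3$ in Proposition~\ref{prop:n-3}, where the cover $\{u,v,w\}$ gives more attachment types and one must also account for which of the edges among $u,v,w$ are present.

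The main obstacle I anticipate is the depth computation for the $b = n-3$ family: the number of isomorphism types of connected graphs with a fixed size-three vertex cover grows, and some of these graphs are not simply disjoint or overlapping stars, so ad hoc leaf-removal arguments may not suffice uniformly. The plan to handle this is to reduce to a manageable normal form — group the non-cover vertices by their neighborhood among $\{u,v,w\}$ (there are at most $2^3 - 1 = 7$ nonempty types), note that within each type the vertices are ``twins'' so only the multiplicity matters, and then establish a depth formula in terms of these multiplicities and the induced subgraph on $\{u,v,w\}$. One would likely use the fact that depth is controlled by the projective dimension of the Stanley–Reisner complex and invoke a splitting of the edge ideal (e.g.\ Betti splittings or the exact sequence obtained by removing a well-chosen vertex/edge) to induct; the worst cases will be those maximizing depth, which should turn out to be exactly the graphs realizing the boundary of $C^*(n)$. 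Finally, I would assemble the three cases: for every $(a,b) \in \gdd(n)$ with $b \ge n-3$, the realizing graph falls into one of the above families, the corresponding depth bound gives $a \le b+1-\lceil b/(n-b)\rceil$, and the remaining constraints $1 \le a \le b \le n-1$ are automatic, so $(a,b) \in C^*(n)$.
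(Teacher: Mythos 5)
Your skeleton is right: you correctly reduce to the three cases $b=n-1,n-2,n-3$ via $\dim S/I(G)=\alpha(G)$, and your $b=n-1$ case (star graph, depth $1$, matching the bound $a\le 1$) is exactly the paper's Remark~\ref{rem:star}. But for the two cases that carry all the content, your plan is ``enumerate the connected graphs with a vertex cover of size $2$ (resp.\ $3$) and compute the depth of each,'' with the actual depth computation deferred to unspecified standard tools (big height, Terai, Betti splittings, a hoped-for formula in the multiplicities of the seven twin-classes). That deferred step is the entire difficulty, you flag it yourself as the main obstacle, and you do not resolve it; establishing a closed-form depth formula for the full two- and three-cover families would be a substantial project in its own right. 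So as written there is a genuine gap at the heart of Propositions~\ref{prop:n-2} and \ref{prop:n-3}.

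The paper avoids the enumeration entirely by arguing by contradiction with Kimura's bouquet theorems. If $(a,b)\notin C^*(n)$, then via Auslander--Buchsbaum the assumption becomes an \emph{upper} bound $\pdim G<\lceil n/2\rceil$ (resp.\ roughly $n-\lfloor 2n/3\rfloor$). Theorem~\ref{thm:K1} gives $\pdim G\ge |F(\calB)|$ for any strongly disjoint set of bouquets, hence in particular $\deg v\le \pdim G$ for every vertex; applied to the two (resp.\ three) cover vertices $v_i$, this caps their degrees at about $n/2$ (resp.\ $n/3$). A degree-sum count over the independent set $W$, $n-2\le\sum_{w\in W}\deg w=\sum_i|N(v_i)\cap W|$, then either contradicts connectedness outright or pins down the graph so precisely that one can exhibit a strongly (or, in the chordal leftover case, semi-strongly) disjoint set of bouquets with too many flowers, contradicting the assumed bound on $\pdim G$. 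The moral difference is that the paper never computes any depth exactly: it only needs the one-sided bouquet bounds, which is why the argument stays short even in the $b=n-3$ case with its four adjacency subcases. If you want to salvage your route, you would need to actually prove the depth formula for your normal forms, which is considerably more work than the contradiction argument.
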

This theorem enables us to show the equality $C^*(n)=\gdd(n)$ when $n$ is relatively small as follows: 
\begin{cor}\label{cor:small}
The equality $C^*(n)=\gdd(n)$ holds if $n \leq 12$. 
\end{cor}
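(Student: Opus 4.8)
The plan is to deduce Corollary~\ref{cor:small} from Theorem~\ref{thm:inc}, Theorem~\ref{main1}, and an elementary case analysis on the dimension. Fix $n$ with $2\le n\le 12$. By Theorem~\ref{thm:inc} we already have $C^*(n)\subseteq\gdd(n)$, so it remains to prove the reverse inclusion. Let $(a,b)\in\gdd(n)$, realized by some $G\in\Graph(n)$ with $a=\dep S/I(G)$ and $b=\dim S/I(G)$. Recall that $\dim S/I(G)=\alpha(G)$, the maximum size of an independent set of $G$, and that standard facts give $1\le a\le b\le n-1$ for every $G\in\Graph(n)$ (one has $\dep\le\dim$ in general, $\alpha(G)\le n-1$ since $G$ is connected with at least one edge, and $\dep S/I(G)\ge 1$ since $I(G)$ is a proper squarefree monomial ideal). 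Thus $(a,b)$ already meets the first requirement in the definition of $C^*(n)$, and only the inequality $a\le b+1-\lceil b/(n-b)\rceil$ needs to be checked.

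I would then split into three ranges of $b$. If $b\ge n-3$, then $(a,b)\in C^*(n)$ is exactly the conclusion of Theorem~\ref{main1}. If $b\le n/2$, then $0<b/(n-b)\le 1$, so $\lceil b/(n-b)\rceil=1$ and the desired inequality reduces to $a\le b$, which always holds. The remaining range is $n/2<b\le n-4$, and this is the only step that uses $n\le 12$: since $n\le 12$ is equivalent to $3(n-4)\le 2n$, we get $b\le n-4\le 2n/3$, and combined with $b>n-b$ this yields $1<b/(n-b)\le 2$, so $\lceil b/(n-b)\rceil=2$. Hence in this range it suffices to prove $a\le b-1$.

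Since $a\le b$ always holds and $a,b$ are integers, proving $a\le b-1$ amounts to ruling out $a=b$, that is, showing that $S/I(G)$ is not Cohen--Macaulay whenever $\alpha(G)=b>n/2$. Suppose it were. Then $S/I(G)$ would be unmixed, and since $I(G)$ is a squarefree monomial ideal this forces all minimal vertex covers of $G$ to have the same cardinality $n-b$; equivalently, $G$ is well covered (all maximal independent sets of $G$ have size $\alpha(G)$). But a connected well-covered graph on $n\ge 2$ vertices satisfies $\alpha(G)\le\lfloor n/2\rfloor$ — a classical fact about well-covered graphs without isolated vertices, which can alternatively be argued directly from a maximum independent set $S$ and its complement together with the fact that deleting the closed neighborhood of an independent set preserves well-coveredness. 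This contradicts $b=\alpha(G)>n/2$. Therefore $a\le b-1$, so $(a,b)\in C^*(n)$, which completes the proof.

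The proof is short because essentially all of the work is already contained in Theorem~\ref{main1}; what remains is bookkeeping together with the standard bound $\alpha(G)\le\lfloor n/2\rfloor$ for connected well-covered graphs. The one genuinely load-bearing arithmetic fact is $n-4\le 2n/3$, which holds precisely for $n\le 12$ and is exactly what forces $\lceil b/(n-b)\rceil=2$ in the intermediate range, so that mere failure of the Cohen--Macaulay property already yields the required estimate $a\le b-1$. The main (modest) obstacles are thus (i) invoking the well-covered bound correctly, and (ii) observing that $n=12$ is precisely the threshold for the method: for $n\ge 13$ the ceiling can equal $3$ already at $b=n-4$, so one would instead need the strictly stronger inequality $a\le b-2$ there, which does not follow from non-Cohen--Macaulayness alone.
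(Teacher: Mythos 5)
Your proof is correct, but it is organized quite differently from the paper's. The paper first disposes of $a=b$ (via Proposition~\ref{prop:CM}) and $b=n-1$ (Remark~\ref{rem:star}), and then simply \emph{enumerates} the exceptional set $C'(n)=\{(a,b): 1\le a<b\le n-2\}\setminus C^*(n)$ for each $n\le 12$, observing that every listed pair happens to satisfy $b=n-2$ or $b=n-3$, so that Propositions~\ref{prop:n-2} and~\ref{prop:n-3} finish the job. You instead replace the enumeration by a uniform three-range analysis of $b$: the range $b\ge n-3$ is Theorem~\ref{main1}, the range $b\le n/2$ is trivial since the ceiling equals $1$, and in the middle range $n/2<b\le n-4$ you isolate the exact arithmetic reason the corollary stops at $12$, namely $n-4\le 2n/3\iff n\le 12$, which forces $\lceil b/(n-b)\rceil=2$ so that only $a\le b-1$ is needed — and this follows from the Cohen--Macaulay bound $b\le n/2$ of Gitler--Valencia. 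This buys a conceptual explanation of the threshold (your closing remark about $n=13$ matches the paper's Remark following the corollary, where $(8,9)\in C'(13)$ is precisely the pair for which non-Cohen--Macaulayness is insufficient), at the cost of no new information for individual $n$. One small economy you missed: the fact you derive via well-coveredness (a connected graph with $\alpha(G)>n/2$ cannot be Cohen--Macaulay) is exactly Proposition~\ref{prop:CM} of the paper, so you could cite it directly rather than sketching the classical bound for well-covered graphs without isolated vertices; as written, that classical bound is asserted rather than proved, which is the only (minor) soft spot in your argument.
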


\medskip

For a graph $G$, we say that $G$ is \textit{chordal} if any cycle in $G$ of length at least $4$ has a chord. 
Let 
\begin{align*}
\gddcc(n)=\left\{(\dep S/I(G), \dim S/I(G)) : G \in \Graph(n), \; \text{$G$ is chordal}\right\}.
\end{align*}
Namely, we restrict $\gdd(n)$ into chordal graphs. 

The second main result of this paper is the following: 
\begin{thm}\label{main2}
Let $n \geq 2$. Then we have $C^*(n) = \gddcc(n)$. 
\end{thm}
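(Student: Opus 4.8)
The plan is to restate Theorem~\ref{main2} as a statement about two graph invariants and then prove the two inclusions. For a connected chordal graph $G$ on $n$ vertices, $\dim S/I(G)$ equals the independence number $\alpha(G)$ (the minimal primes of $I(G)$ are generated by the minimal vertex covers of $G$), while $S/I(G)$ is sequentially Cohen--Macaulay because $G$ is chordal; hence $\pdim S/I(G)$ is the big height of $I(G)$, which is $n-i(G)$, where $i(G)$ denotes the least cardinality of a maximal independent set of $G$, and the Auslander--Buchsbaum formula gives $\dep S/I(G)=i(G)$ (see Subsection~\ref{subsec:AB} and \cite[Section 9]{HerzogHibi}). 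Thus $\gddcc(n)=\{(i(G),\alpha(G)):G\in\Graph(n)\text{ chordal}\}$. Next, writing $\tau=n-b$ and clearing the ceiling (using $\lceil b/\tau\rceil\le m\Leftrightarrow b\le\tau m$ for an integer $m$, valid since $\tau>0$), one checks that $(a,b)\in C^*(n)$ if and only if $1\le a\le b\le n-1$ and $n\le(n-b)(b-a+2)$. So the task becomes: $(a,b)$ is of the form $(i(G),\alpha(G))$ for some connected chordal $G$ on $n$ vertices if and only if $1\le a\le b\le n-1$ and $n\le(n-b)(b-a+2)$.

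For the inclusion $C^*(n)\subseteq\gddcc(n)$, Theorem~\ref{thm:inc} already gives $C^*(n)\subseteq\gdd(n)$, so it suffices to realize each $(a,b)\in C^*(n)$ by a connected \emph{chordal} graph. I would do this either by checking that the graphs built in the proof of \cite[Theorem~2.8]{HKKMV} can be taken chordal (they are assembled from cliques, stars and pendant vertices), or by supplying explicit chordal models adapted to the size of $b$: for instance, when $b$ is large one may take a clique $W$ on $n-b$ vertices and hang $m_1,\dots,m_{n-b}\ge 1$ pendant vertices on its vertices with $\sum_j m_j=b$ and $\max_j m_j=b-a+1$, so that the only cycles of the graph lie in $W$ (hence it is chordal), and a direct count gives $\alpha=b$ (the pendants form the maximum independent set) and $i=1+b-\max_j m_j=a$; the remaining pairs, for which $C^*(n)$ imposes only $1\le a\le b$, are covered by caterpillars or by cones over a small chordal graph, verified by an analogous elementary computation of $\alpha$ and $i$.

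The heart of the theorem is the reverse inclusion $\gddcc(n)\subseteq C^*(n)$: for connected chordal $G$ on $n\ge2$ vertices one must show $1\le i(G)\le\alpha(G)\le n-1$, which is immediate, together with $n\le(n-\alpha(G))(\alpha(G)-i(G)+2)$. Putting $\tau=\tau(G)=n-\alpha(G)$, this last inequality is equivalent to $\alpha(G)\le\tau(\alpha(G)-i(G)+1)$, which is automatic once $\alpha(G)\le\tau$, so we may assume $\alpha(G)>\tau$. Here I would use that chordal graphs are perfect, so $V(G)$ partitions into $\alpha=\alpha(G)$ cliques $C_1,\dots,C_\alpha$ with $\sum_j(|C_j|-1)=\tau$. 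Fix a maximal independent set $B$ with $|B|=i(G)$; writing $i=i(G)$, the set $B$ meets at most one vertex of each $C_j$, hence meets exactly $i$ of them, say $C_1,\dots,C_i$, and is disjoint from the other $p:=\alpha-i$ cliques. Since $B$ is maximal it is a dominating set, so every vertex of $C_{i+1},\dots,C_\alpha$ has a neighbour in $B$; I would combine this domination property with chordality applied to the neighbourhoods of the vertices of $B$ (equivalently, by repeatedly peeling off a simplicial vertex $v$, noting $G-v$ remains connected and chordal and tracking the changes of $\alpha$, $i$, $\tau$) to bound $\sum_{j>i}|C_j|$, and hence $n$, from above in terms of $i$ and $\tau$, which is exactly the required inequality.

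I expect the main obstacle to be precisely this last estimate — turning the domination property of $B$ into the clean bound $\alpha(G)\le\tau(\alpha(G)-i(G)+1)$. The delicate point is that a single vertex of $B$ may dominate vertices scattered over many of the $p$ missed cliques, so one needs genuine input from chordality (simpliciality, or that induced neighbourhoods are again perfect) to limit how those cliques can attach to $B$; moreover the bound is sharp, as the stars $K_{1,n-1}$ and the coronas $P_t\circ K_1$ show, so the argument must be essentially lossless. This is also where chordality is used in an essential way: the analogous bound is not available for arbitrary connected graphs, consistent with Question~\ref{qqq} being open beyond small $n$.
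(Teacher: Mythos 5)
Your reformulation is sound: for connected chordal $G$ one indeed has $\dim S/I(G)=\alpha(G)$ and $\dep S/I(G)=i(G)$ (the paper gets the latter from Kimura's description of $\pdim$ via maximal minimal vertex covers, Theorem~\ref{thm:K2}, rather than via sequential Cohen--Macaulayness, but the two routes agree), and your restatement of the membership condition for $C^*(n)$ as $n\le(n-b)(b-a+2)$, equivalently $b\le(n-b)(b-a+1)$, is correct. Your plan for $C^*(n)\subseteq\gddcc(n)$ is essentially the paper's: check that the graphs of \cite[Construction 2.5]{HKKMV} are chordal when $a+b>n$, and supply an explicit chordal construction otherwise; your pendant-on-a-clique model works exactly in the range $a+b\ge n$ and $b\le(n-b)(b-a+1)$, and the complementary range $a+b\le n$ does admit a similar explicit chordal graph (the paper uses a clique $U$ of size $n-b$ with $a-1$ pendants and a complete bipartite attachment of the remaining clique vertices to the remaining independent vertices), so this half is fine modulo writing out that second construction.

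The genuine gap is in the inclusion $\gddcc(n)\subseteq C^*(n)$, which you yourself flag as ``the main obstacle'': you reduce correctly to the inequality $\alpha(G)\le(n-\alpha(G))(\alpha(G)-i(G)+1)$, but the proposed mechanism --- a clique partition into $\alpha(G)$ cliques from perfectness, plus the domination property of a minimum maximal independent set $B$ --- is not carried to a proof, and it is not clear it can be: as you note, a single vertex of $B$ can dominate vertices spread across many of the cliques missed by $B$, and perfectness alone does not control this. What is missing is the precise inductive engine the paper uses (Lemma~\ref{lem:chordal}): for a simplicial vertex $v$ at the start of a perfect elimination ordering, one has the \emph{exact} recursions $\dim G=\max\{\dim(G\setminus N[v])+1,\ \dim(G\setminus v)\}$ and $\dep G=\min\{\dep(G\setminus N[v])+1,\ \dep(G\setminus v)\}$, the second of which is specific to chordal graphs and is itself proved from Theorem~\ref{thm:K2}. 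With these, the inequality $b\le(n-b)(b-a+1)$ follows by induction on $n$ in two short algebraic cases according to whether $b=\dim(G\setminus v)$ or $b=\dim(G\setminus N[v])+1$, using $a\le\dep(G\setminus v)$ in the first case and $a\le\dep(G\setminus N[v])+1$ in the second. You mention ``peeling off a simplicial vertex and tracking the changes of $\alpha$, $i$, $\tau$'' as an alternative, which is indeed the right idea, but without the displayed recursions (in particular the depth recursion, which is the nontrivial one) the key estimate remains unproved, so the hard half of the theorem is not established by the proposal.
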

Note that in \cite[Section 3]{HKKMV}, the possible pairs of dimensions and depths of the edge ideals of Cameron--Walker graphs are determined. 
See \cite[Theorem 3.15]{HKKMV}. We can observe that such pairs for Cameron--Walker graphs are quite small compared with $C^*(n)$. 

\medskip

A brief structure of this paper is as follows. 
In Section~\ref{sec:pre}, we recall some fundamental materials used for the proofs of Theorems~\ref{main1} and \ref{main2}. 
In Section~\ref{sec:main1}, we prove Theorem~\ref{main1} and Corollary~\ref{cor:small}. 
In Section~\ref{sec:main2}, we prove Theorem~\ref{main2}. 

\medskip

\subsection*{Acknowledgements} 
The first named author is partially supported by JSPS Grant-in-Aid for Scientists Research (C) JP20K03513. 
The second named author is partially supported by Grant-in-Aid for JSPS Fellows JP21J21603. 

\bigskip

\section{Preliminaries}\label{sec:pre}

In this section, we prepare some terminologies, notions and statements for the proofs of our main theorems. 

\subsection{Terminologies on graphs}

Let $G$ be a graph on the vertex set $V(G)$ with the edge set $E(G)$. 
\begin{itemize}
\item We call a subset $W$ of $V(G)$ an {\em independent set} (resp. {\em clique}) if $\{v,w\} \not\in E(G)$ (resp. $\{v,w\} \in E(G)$) for any $v,w \in W$. Let 
\begin{align*}
\alpha(G)=\max\{|W| : W \text{ is an independent set of }G\}, 
\end{align*}
called the {\em independence number} of $G$. 
It is well known that $$\dim S/I(G)=\alpha(G).$$
\item Let $M \subset E(G)$ be a subset of the edge set. We say that $M$ is an {\em induced matching} of $G$ if $M$ satisfies that 
\begin{itemize}
\item $e \cap e' = \emptyset$ for any $e,e' \in M$ with $e \neq e'$; and 
\item there is no $e'' \in E(G)$ such that $e \cap e'' \neq \emptyset$ and $e' \cap e'' \neq \emptyset$. 
\end{itemize}
\item For a vertex $v$ of $G$, let $N(v)=\{w \in V : \{v,w\} \in E(G)\}$, let $N[v]=N(v) \cup \{v\}$ and let $\deg v=|N(v)|$. 
\item For a vertex $v$ of $G$, let $G \setminus v$ denote the induced subgraph of $G$ by $V(G) \setminus \{v\}$. 
Similarly, for $U \subset V(G)$, let $G \setminus U$ denote the induced subgraph of $G$ by $V(G) \setminus U$. 
\item We say that $C \subset V(G)$ is a {\em vertex cover} if $C \cap e \neq \emptyset$ for any $e \in E(G)$. 
We call a vertex cover $C$ {\em minimal} if $C \setminus \{v\}$ is not a vertex cover of $G$ for any $v \in C$. 
\item A {\em perfect elimination ordering} of $G$ is an ordering $v_n,\ldots,v_1$ of the vertices of $G$ such that 
$\{v_1,\ldots,v_{k-1}\} \cap N(v_k)$ forms a clique for each $k=2,\ldots,n$. 
It is known that $G$ is chordal if and only if $G$ has a perfect elimination ordering (\cite{D}). 
\end{itemize}

\medskip

In this paper, given a graph $G$, we use the notation $\dim G$, $\dep G$ and $\pdim G$ 
instead of $\dim S/I(G)$, $\dep S/I(G)$ and $\pdim S/I(G)$ for short, respectively, where $\pdim$ denotes the projective dimension.

\subsection{Bouquets and projective dimensions of $S/I(G)$}

We recall the terminology and the notation used in \cite{Kimura}. 
The graph $B$ on the vertex set $V(B)=\{w,z_1,\ldots,z_d\}$ with the edge set $\{\{w,z_i\} : i=1,\ldots,d\}$ is called a {\em bouquet}. 
Namely, a bouquet is nothing but a star graph. 
A vertex $w$ is called a {\em root}, the vertices $z_i$ are called {\em flowers} and the edges $\{w,z_i\}$ are called {\em stems} of $B$. 
These words were originally used in \cite[Definition 1.7]{Z}. 

Given a graph $G$, we call $B$ a bouquet of $G$ if $B$ is a (not necessarily induced) subgraph of $G$. 
Let $\calB=\{B_1,\ldots,B_p\}$ be a set of bouquets of $G$. We define the following: 
\begin{align*}
F(\calB)&:=\bigcup_{i=1}^p\{z \in V(B_i) : z \text{ is a flower of }B_i\}; \\
R(\calB)&:=\bigcup_{i=1}^p\{z \in V(B_i) : z \text{ is a root of }B_i\}; \\
S(\calB)&:=\bigcup_{i=1}^p\{s \in E(B_i) : s \text{ is a stem of }B_i\}. 
\end{align*}

Let $G$ be a graph and let $\calB=\{B_1,\ldots,B_p\}$ be a set of bouquets of $G$. 
Assume that $V(B_i) \cap V(B_j)=\emptyset$ holds for any $1 \leq i<j\leq p$. 
\begin{itemize}
\item We say that $\calB$ is {\em strongly disjoint} 
if there are stems $s_i \in E(B_i)$ for each $i$ such that $\{s_1,\ldots,s_p\}$ forms an induced matching of $G$. 
\item We say that $\calB$ is {\em semi-strongly disjoint} if $R(\calB)$ forms an independent set. 
\end{itemize}
Note that if $\calB$ is strongly disjoint, since any stem in $B_i$ is incident to the root of $B_i$, 
we see that any two roots should not be adjacent. Namely, $\calB$ becomes semi-strongly disjoint. 

\begin{thm}[{\cite[Theorem 3.1]{Kimura}}]\label{thm:K1}
Given any graph $G$, we have 
$$\pdim G \geq \max\{|F(\calB)| : \calB \text{ is a strongly disjoint set of bouquets of }G\}.$$
\end{thm}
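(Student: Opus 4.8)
The plan is to prove the lower bound $\pdim G \geq |F(\calB)|$ for an arbitrary strongly disjoint set of bouquets $\calB = \{B_1,\ldots,B_p\}$, by exhibiting a nonzero Betti number in homological degree $|F(\calB)|$. The natural tool here is Hochster's formula (or, equivalently, the Taylor-type resolution combined with Alexander duality), which computes $\beta_{i,\sigma}(S/I(G))$ in terms of the reduced homology of induced subcomplexes of a simplicial complex built from $G$. Let me set $\sigma := F(\calB) \cup R(\calB) \subseteq [n]$; since each $B_i$ contributes $\deg(\text{root})$ flowers and one root, we have $|\sigma| = |F(\calB)| + p$. The claim I would aim for is that $\beta_{|F(\calB)|, \sigma}(S/I(G)) \neq 0$, which gives $\pdim G \geq |F(\calB)|$ immediately since $\pdim G = \max\{i : \beta_{i,\tau} \neq 0 \text{ for some } \tau\}$.

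The key computational step is to understand the relevant (co)homology. By Hochster's formula, $\beta_{i,\sigma}(S/I(G)) = \dim_\kk \widetilde{H}_{|\sigma| - i - 1}\big(\Delta_\sigma; \kk\big)$, where $\Delta$ is the independence complex of $G$ (the Stanley--Reisner complex of $I(G)$) and $\Delta_\sigma$ is its restriction to the vertex set $\sigma$. With $|\sigma| = |F(\calB)| + p$ and $i = |F(\calB)|$, this homological degree is $|\sigma| - i - 1 = p - 1$. So I must show $\widetilde{H}_{p-1}(\Delta_\sigma; \kk) \neq 0$. Now $\Delta_\sigma$ is the independence complex of the induced subgraph $G_\sigma := G[\sigma]$. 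Here is where the strong disjointness is essential: by definition there are stems $s_i = \{w_i, z_i\}$ (with $w_i$ the root of $B_i$) forming an \emph{induced} matching, so in $G_\sigma$ there is no edge between $V(B_i)$ and $V(B_j)$ for $i \neq j$ \emph{except possibly among the roots}. But strong disjointness forces the roots to be pairwise non-adjacent as well (as noted in the excerpt), so in fact $G_\sigma$ decomposes as a disjoint union $G_\sigma = G[V(B_1)] \sqcup \cdots \sqcup G[V(B_p)]$, and each $G[V(B_i)]$ is precisely the star (bouquet) $B_i$. Hence $\Delta_\sigma$ is the join $\Delta(B_1) * \cdots * \Delta(B_p)$ of the independence complexes of the individual bouquets.

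The final step is a direct homology computation for a single bouquet and the Künneth formula for joins. The independence complex $\Delta(B_i)$ of a bouquet with root $w_i$ and flowers $z_1,\ldots,z_{d_i}$ has facets $\{w_i\}$ and $\{z_1,\ldots,z_{d_i}\}$, i.e.\ it is the disjoint union of a point and a $(d_i-1)$-simplex, which is contractible onto two components; thus $\widetilde{H}_0(\Delta(B_i); \kk) \cong \kk$ and all higher reduced homology vanishes. Using $\widetilde{H}_*(X * Y) \cong \widetilde{H}_*(X) \otimes \widetilde{H}_*(Y)$ shifted appropriately (more precisely $\widetilde{H}_{n}(X*Y) \cong \bigoplus_{i+j=n-1} \widetilde{H}_i(X) \otimes \widetilde{H}_j(Y)$ over a field), an easy induction on $p$ gives $\widetilde{H}_{p-1}(\Delta_\sigma; \kk) \cong \kk^{?} \neq 0$, as each factor contributes its single $\widetilde{H}_0$ and the join shifts the total degree up by $p-1$. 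This yields $\beta_{|F(\calB)|,\sigma} \neq 0$ and hence the theorem.

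The main obstacle I anticipate is purely bookkeeping: carefully verifying that strong disjointness really does imply $G_\sigma$ splits as the claimed disjoint union of bouquets (one must check no ``stray'' edges of $G$ land inside $\sigma$ between distinct bouquets or create extra adjacencies within one $V(B_i)$ beyond the stems — here one uses that $B_i$ is a subgraph but the induced-matching condition controls the relevant edges, and any extra edge inside $V(B_i)$ would only shrink the independence complex, which we must check does not kill $\widetilde{H}_0$). A cleaner alternative, if the disjoint-union argument gets delicate, is to argue via the Taylor complex or via the ``splitting'' / colon-ideal long exact sequences for $\pdim$, peeling off one bouquet at a time and using that each flower contributes $+1$ to the projective dimension; but the Hochster-formula route is the most transparent and is the one I would write up.
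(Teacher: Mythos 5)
The paper itself offers no proof of this statement---it is quoted from \cite[Theorem 3.1]{Kimura}---so your proposal can only be measured against that source. Your overall strategy is the right one and is essentially Kimura's: apply Hochster's formula to $\sigma = V(\calB) = F(\calB)\cup R(\calB)$ and show $\widetilde{H}_{p-1}(\Delta_\sigma;\kk)\neq 0$, hence $\beta_{|F(\calB)|,\sigma}(S/I(G))\neq 0$ and $\pdim G\geq |F(\calB)|$. The indexing is also correct: $|\sigma|=|F(\calB)|+p$ and $|\sigma|-|F(\calB)|-1=p-1$.

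The gap is in the decomposition step, and it is not bookkeeping: strong disjointness does \emph{not} imply that $G[\sigma]$ is the disjoint union $B_1\sqcup\cdots\sqcup B_p$. The induced-matching hypothesis concerns only the $p$ chosen stems $s_i=\{w_i,z_i\}$: it forbids edges of $G$ meeting two distinct $s_i$'s (in particular the roots are pairwise non-adjacent), but it says nothing about an edge joining a flower of $B_i$ \emph{other than} $z_i$ to a flower, or even to the root, of $B_j$, nor about edges among the flowers of a single $B_i$ (recall that $B_i$ is only required to be a subgraph of $G$, not an induced one). Concretely, if $B_1$ has root $w_1$ and flowers $z_1,a$, and $B_2$ has root $w_2$ and flowers $z_2,b$, with chosen stems $\{w_1,z_1\}$ and $\{w_2,z_2\}$, then $G$ may contain the edges $\{a,b\}$, $\{a,z_2\}$, $\{a,w_2\}$ without violating strong disjointness. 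Hence $\Delta_\sigma$ need not be the join $\Delta(B_1)*\cdots*\Delta(B_p)$, and the K\"unneth computation does not apply. What does survive of your idea is that the chosen stems span an induced copy of $S^0*\cdots*S^0\cong S^{p-1}$ inside $\Delta_\sigma$: every set containing exactly one endpoint of each $s_i$ is independent, precisely by the induced-matching condition, so $c=([w_1]-[z_1])\otimes\cdots\otimes([w_p]-[z_p])$ is a genuine $(p-1)$-cycle of $\Delta_\sigma$. The actual content of the theorem is that $c$ is not a boundary in the possibly much larger complex $\Delta_\sigma$, which requires a further argument (Kimura establishes it by induction on $p$); your write-up replaces this step with a false structural claim, so as it stands the proof is incomplete.
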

By Theorem~\ref{thm:K1}, we see that given a graph $G$, we have \begin{align}\label{eq:deg}\deg v \leq \pdim G\text{ for any }u \in V(G)\end{align} 
since $N(u) \cup \{u\}$ forms a (set of a single) bouquet with a root $u$.  

\begin{thm}[{\cite[Theorem 5.3, Corollary 5.6]{Kimura}}]\label{thm:K2}
Let $G$ be chordal. Then we have 
\begin{align*}\pdim G &= \max\{|F(\calB)| : \calB \text{ is a semi-strongly disjoint set of bouquets of }G\} \\
&=\max\{ |C| : C \text{ is a minimal vertex cover of }G\}.\end{align*}
\end{thm}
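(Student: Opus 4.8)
\emph{The plan.} Write $b(G):=\max\{|C| : C \text{ is a minimal vertex cover of } G\}$ for the quantity in the second displayed line. My strategy is to pin every number occurring in the statement to $b(G)$. I would establish three facts: (a) a purely combinatorial identity $\max\{|F(\calB)| : \calB \text{ semi-strongly disjoint}\}=b(G)$ valid for \emph{every} graph; (b) a lower bound $\pdim G \ge b(G)$ valid for every graph; and (c) the upper bound $\pdim G \le b(G)$, which is the only place where chordality enters. Granting (a)--(c), for chordal $G$ we obtain
\[
b(G)\;\le\;\pdim G\;\le\;b(G),
\]
so $\pdim G = b(G) = \max\{|F(\calB)|:\calB \text{ semi-strongly disjoint}\}$, and both equalities of the theorem follow at once.

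\emph{Step (a): the combinatorial identity.} For the inequality ``$\ge b(G)$'' I would start from a minimal vertex cover $C$ with $|C|=b(G)$; its complement $I:=V(G)\setminus C$ is a maximal independent set, and minimality of $C$ forces each $c\in C$ to have a neighbour $u(c)\in I$. Grouping the vertices of $C$ by their chosen neighbour $u(c)$ yields a family of bouquets with roots in $I$ and flower set exactly $C$; since $R(\calB)\subseteq I$ is independent, the family is semi-strongly disjoint and contributes $|F(\calB)|=|C|=b(G)$. For the reverse inequality, given any semi-strongly disjoint $\calB$ I would extend its independent root set $R(\calB)$ to a maximal independent set $J$. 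Then $V(G)\setminus J$ is a minimal vertex cover, and no flower can lie in $J$, because a flower is adjacent to its root and the root lies in $R(\calB)\subseteq J$. Hence $F(\calB)\subseteq V(G)\setminus J$, so $|F(\calB)|\le b(G)$. Both halves use no chordality assumption, which is why this identity is the general phenomenon and chordality is needed only for the homology.

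\emph{Steps (b) and (c): the homological bounds.} Since $I(G)$ is squarefree it is radical, so $S/I(G)$ has no embedded primes and $\operatorname{Ass}(S/I(G))=\operatorname{Min}(S/I(G))$ consists exactly of the monomial primes $P_C=(x_i:i\in C)$ attached to minimal vertex covers $C$, with $\operatorname{ht}P_C=|C|$. Localising at such an associated prime $P$ gives $\operatorname{depth}(S/I(G))_P=0$, whence $\pdim_{S_P}(S/I(G))_P=\operatorname{ht}P$ by Auslander--Buchsbaum over the regular local ring $S_P$; as localisation does not raise projective dimension, $\pdim G\ge \operatorname{ht}P_C=|C|$ for every minimal vertex cover, i.e.\ $\pdim G\ge b(G)$. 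This proves (b) for every graph. For (c) I would bring in chordality through sequential Cohen--Macaulayness: for chordal $G$ the ring $S/I(G)$ is sequentially Cohen--Macaulay (Francisco--Van Tuyl), and for a sequentially Cohen--Macaulay quotient one has $\dep S/I=\min\{\dim S/P:P\in\operatorname{Ass}(S/I)\}$, equivalently $\pdim S/I=\max\{\operatorname{ht}P:P\in\operatorname{Ass}\}=b(G)$. A self-contained alternative is induction on $|V(G)|$ through a simplicial vertex $v$ of $G$, using the short exact sequence relating $S/I(G)$, $S/(I(G):x_v)$ and $S/(I(G),x_v)$ and controlling $\pdim$ by the mapping cone while tracking how $b(\cdot)$ changes under passage to $G\setminus v$ and $G\setminus N[v]$.

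\emph{Main obstacle.} Steps (a) and (b) are soft and general; the real content is the chordal upper bound (c). The quick route outsources it entirely to the sequential Cohen--Macaulay property of chordal edge ideals, so all the difficulty is concentrated in that input. On the self-contained route the hard part is making the simplicial-vertex induction bookkeeping-tight, since deleting $N[v]$ can alter the extremal minimal vertex covers and one must verify that the projective dimension never exceeds the updated value of $b(G)$. I note, finally, that Theorem~\ref{thm:K1} is not literally required here: it supplies a lower bound only through \emph{strongly} disjoint bouquets, whereas the semi-strongly disjoint lower bound we need follows for free by combining the combinatorial identity (a) with the associated-prime bound (b).
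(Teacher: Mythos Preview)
The paper does not supply its own proof of Theorem~\ref{thm:K2}; the result is quoted from \cite{Kimura} (Theorem~5.3 and Corollary~5.6 there) and used as a black box throughout Sections~\ref{sec:main1} and~\ref{sec:main2}. So there is no in-paper argument to compare yours against.

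Evaluating your argument on its own merits: it is correct. Step~(a) holds for all graphs, and both directions are clean; the only point worth making explicit is that the complement of a maximal independent set $J$ is always a \emph{minimal} vertex cover (each $v\notin J$ has a neighbour in $J$ by maximality of $J$, so removing $v$ from $V(G)\setminus J$ uncovers that edge), which you use implicitly when bounding $|F(\calB)|\le |V(G)\setminus J|\le b(G)$. Step~(b) is the standard associated-prime lower bound on projective dimension. Step~(c) via the Francisco--Van Tuyl theorem that chordal edge ideals are sequentially Cohen--Macaulay, combined with the identity $\dep M=\min\{\dim S/P:P\in\operatorname{Ass}(M)\}$ valid for sequentially Cohen--Macaulay modules, is correct and is the most economical route. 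Your alternative induction through a simplicial vertex is, incidentally, exactly the depth recursion this paper carries out later in Lemma~\ref{lem:chordal}(2), so either path closes the argument.
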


\subsection{Auslander--Buchsbaum formula}\label{subsec:AB}
Let $G$ be a graph on $[n]$. Then $S/I(G)$ is an $S$-module. Since $S$ is a polynomial ring, $\pdim G < \infty$ holds. 
Hence, by the Auslander--Buchsbaum formula (see, e.g., \cite[Theorem 1.3.3]{BH}), we have $$ \pdim G + \dep G =n.$$ 
We will often use this in the sequel. 

\subsection{On the sets $C^*(n)$ and $C^-(n)$}

Let us recall the set $C^-(n)$ defined in \cite[Section 2]{HKKMV}. For $n \geq 2$, let 
$$C^-(n):=\{(1,n-1)\} \cup \left\{(a,b) \in \ZZ^2 : 1 \leq a \leq b, \; a \leq \left\lfloor \frac{n}{2} \right\rfloor, \; b \leq n-2\right\}.$$
Then, by the proof of \cite[Theorem 2.9]{HKKMV}, we see that $C^-(n) \subset C^*(n)$. 
The difference of $C^-(n)$ and $C^*(n)$ will be mentioned below. See Example~\ref{ex:C^*}. 

\begin{rem}[{cf. \cite[Proposition 1.3]{HKKMV}}]\label{rem:star}
Let $G \in \Graph(n)$ and assume that $\alpha(G)=n-1$. 
Then $G$ itself must  be a bouquet. In particular, $G$ is chordal. Thus, by Theorem~\ref{thm:K2}, we have $\pdim G=n-1$. Hence, $\dep G=1$. 
These imply that if $(a,n-1) \in \gdd(n)$, then $a=1$. In particular, $(1,n-1) \in C^-(n) \subset C^*(n)$. 
\end{rem}

\begin{rem}\label{rem:C^-}
Given integers $a,b$ with $1 \leq a \leq b \leq n-1$, assume that $a+b \leq n$. 
Then $(a,b) \in C^-$ automatically holds. In fact, if $b=n-1$, then $a=1$; if $b \leq n-2$, since $2a \leq a+b \leq n$ holds, we have $a \leq \lfloor n/2 \rfloor$. 
\end{rem}

\bigskip

\section{In the case of graphs with large independence number}\label{sec:main1}

In this section, we consider the inclusion $\gdd(n) \subset C^*(n)$ for some special cases. 
In particular, we discuss when $(a,b) \in \gdd(n)$ belongs to $C^*(n)$. 

First, we discuss the case where $a=b$, i.e., the corresponding graph is Cohen--Macaulay. 
\begin{prop}[{cf. \cite[Theorem 3.3]{GV}}]\label{prop:CM}
Assume that $(b,b) \in \gdd(n)$. Then $(b,b) \in C^*(n)$. 
\end{prop}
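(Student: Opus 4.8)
The plan is to show that if $(b,b) \in \gdd(n)$, so that there is a connected graph $G$ on $[n]$ with $\dim G = \dep G = b$, then the pair $(b,b)$ satisfies the defining inequality of $C^*(n)$, namely $b \leq b+1-\lceil b/(n-b)\rceil$, which is equivalent to $\lceil b/(n-b)\rceil \leq 1$, i.e. $b \leq n-b$, i.e. $2b \leq n$. So the real content is: \emph{a connected Cohen--Macaulay edge ideal on $n$ vertices has $\dim \leq n/2$}. Of course one also needs $1 \leq b \leq n-1$, but $b = \dim G = \alpha(G) \geq 1$ is automatic, and $b \leq n-1$ holds because $G$ is connected (not edgeless). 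The case $b = n-1$ is handled by Remark~\ref{rem:star} (then $G$ is a bouquet, $\dep G = 1 \neq n-1$ unless $n=2$), so we may assume $b \leq n-2$; then the target inequality $2b \leq n$ already places $(b,b)$ inside $C^-(n)$ by Remark~\ref{rem:C^-}.

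First I would recall the Auslander--Buchsbaum formula from Subsection~\ref{subsec:AB}: $\pdim G + \dep G = n$, so $\dep G = b$ forces $\pdim G = n - b$. Combined with $\dim G = \alpha(G) = b$, the goal $2b \leq n$ becomes exactly $\alpha(G) \leq \pdim G$. Now I would invoke the lower bound for projective dimension coming from bouquets, Theorem~\ref{thm:K1}: it suffices to exhibit a strongly disjoint set of bouquets $\calB$ of $G$ with $|F(\calB)| \geq \alpha(G)$. The natural candidate is built from a maximum independent set $W$ of $G$ with $|W| = \alpha(G) = b$: since $G$ is connected and $b \leq n-2$, the complement $[n]\setminus W$ is nonempty and every vertex of $W$ has a neighbor outside $W$. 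Actually, the cleanest route is to use the structure of Cohen--Macaulay graphs directly.

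Indeed, the main step is to use the characterization (cf. \cite[Theorem 3.3]{GV}) of connected Cohen--Macaulay graphs, or more elementarily the fact that a Cohen--Macaulay simplicial complex is unmixed: all minimal vertex covers of $G$ have the same cardinality $n - \alpha(G) = n - b$. Equivalently, the independence complex of $G$ is pure of dimension $b-1$, so every maximal independent set has size exactly $b$. From unmixedness one extracts a \emph{perfect matching-like} structure: take a maximal independent set $W$, $|W| = b$; for each $v \in W$ pick a private neighbor, and check these choices can be made to yield $b$ pairwise disjoint edges forming an induced matching — this gives a strongly disjoint family of $b$ single-stem bouquets, hence $\pdim G \geq b = \alpha(G)$ by Theorem~\ref{thm:K1}, and then $n = \pdim G + \dep G \geq b + b = 2b$, as desired. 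The existence of such an induced matching of size $\alpha(G)$ in a Cohen--Macaulay (equivalently, unmixed-plus-connected-in-codimension-one) graph is exactly the technical heart and is where I expect to lean most heavily on \cite[Theorem 3.3]{GV} or on Theorem~\ref{thm:K2} if $G$ happens to be chordal; for the general (non-chordal) Cohen--Macaulay case, the fact that $G$ has an induced matching of size equal to its independence number is a known structural feature of Cohen--Macaulay graphs that I would cite and apply.

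The main obstacle is precisely producing the induced matching of size $\alpha(G)$: disjointness of the chosen private neighbors is easy, but ruling out an edge of $G$ joining two of the chosen stems (the ``induced'' condition) requires the full strength of the Cohen--Macaulay hypothesis rather than mere unmixedness, and is the step I would spend the most care on — either by quoting the structure theorem for Cohen--Macaulay graphs or by a direct argument passing to a shedding/elimination vertex and inducting on $n$.
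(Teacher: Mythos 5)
Your reduction of the statement to the inequality $2b\le n$ for a connected Cohen--Macaulay graph is exactly right, and it is all the paper does: the proof in the paper consists of observing that $(b,b)\in C^*(n)$ is equivalent to $\lceil b/(n-b)\rceil\le 1$, i.e.\ $b\le n/2$, and then citing \cite[Theorem~3.3]{GV} for the fact that Cohen--Macaulay graphs satisfy this bound. So the ``fallback'' branch of your proposal (quote Gitler--Valencia and stop) coincides with the paper's argument and is correct.

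The self-contained argument you sketch in place of that citation, however, has a genuine gap. You propose to produce a strongly disjoint family of $b$ single-stem bouquets, i.e.\ an induced matching of size $\alpha(G)$, and you assert that the existence of an induced matching of size equal to the independence number ``is a known structural feature of Cohen--Macaulay graphs.'' This is false. The $5$-cycle $C_5$ is connected and Cohen--Macaulay with $\dep = \dim = 2 = \alpha(C_5)$, but any two disjoint edges of $C_5$ are joined by a third edge, so its induced matching number is $1 < \alpha(C_5)$. (The pair $(2,2)\in\gdd(5)$ still lies in $C^*(5)$, and one can still get $\pdim C_5\ge 2$ from Theorem~\ref{thm:K1} by using a single bouquet with two flowers, but not by the induced-matching-of-size-$\alpha$ route you describe.) More generally, choosing disjoint ``private neighbor'' edges for a maximum independent set gives a matching, but the induced condition cannot be forced, and no amount of care at that step will repair the argument since the target statement is simply not true. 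If you want a proof that does not lean on \cite{GV}, you would need a genuinely different mechanism for bounding $\alpha(G)$ by $\pdim G$ (equivalently, for exhibiting a strongly disjoint set of bouquets, with multi-flower bouquets allowed, covering at least $\alpha(G)$ flowers); as written, the key step fails.
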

\begin{proof}
We see that $(b,b) \in C^*(n)$ holds if and only if $\lceil b/(n-b) \rceil \leq 1$ holds. 
Here, it follows from \cite[Theorem 3.3]{GV} that if $G$ is Cohen--Macaulay, then $b \leq n/2$. 
This implies that when $G$ is Cohen--Macaulay with $b=\dim G$, we have $b \leq n/2$, i.e., $(b,b) \in C^*(n)$. 
\end{proof}

Next, we discuss the case where $b$ is relatively large. 
Note that we always have $1 \leq b \leq n-1$ and the case $b=n-1$ has been already discussed in Remark~\ref{rem:star}. 

\begin{prop}\label{prop:n-2}
Let $n \geq 4$. If $(a,n-2) \not\in C^*(n)$, then $(a,n-2) \not\in \gdd(n)$. 
\end{prop}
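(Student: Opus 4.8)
First I would unwind what membership in $C^*(n)$ means when $b=n-2$. We have $n-b = 2$, so the condition $a \le b+1 - \lceil b/(n-b) \rceil$ becomes $a \le (n-1) - \lceil (n-2)/2 \rceil$, i.e. $a \le \lfloor n/2 \rfloor$ (checking the two parities of $n$ separately). Since we always have $1 \le a \le b = n-2$ for any pair in $\gdd(n)$, the hypothesis $(a,n-2) \notin C^*(n)$ forces $a \ge \lfloor n/2 \rfloor + 1$, equivalently $2a \ge n+1$. So what must be shown is: \emph{no connected graph $G$ on $n$ vertices with $\alpha(G) = n-2$ has $\dep G \ge \lfloor n/2 \rfloor + 1$}; equivalently, via Auslander--Buchsbaum, $\pdim G \le \lceil n/2 \rceil - 1$ is impossible, i.e. we must prove $\pdim G \ge \lceil n/2 \rceil$, hence $\pdim G \ge n-a$ whenever $2a > n$.

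The plan is to understand the structure of connected graphs with $\alpha(G) = n-2$, i.e. vertex cover number $\tau(G) = 2$. Such a $G$ has a vertex cover $\{u,v\}$ of size $2$; since $G$ is connected on $n \ge 4$ vertices and every edge meets $\{u,v\}$, the complement of $\{u,v\}$ is an independent set of size $n-2$, and each such vertex is adjacent to $u$, to $v$, or to both, with $u,v$ possibly adjacent. So $G$ is built from two bouquet-like neighborhoods sharing some common flowers. I would then exhibit a large strongly disjoint set of bouquets and invoke Theorem~\ref{thm:K1}. Concretely: partition the $n-2$ vertices outside the cover into $A$ (adjacent only to $u$), $B$ (adjacent only to $v$), and $C$ (adjacent to both). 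Taking the bouquet at $u$ with flowers $A \cup C$ and the bouquet at $v$ with flowers $B$ (picking stems to avoid conflicts) gives a strongly disjoint set of bouquets with $|F(\calB)| = |A| + |C| + |B| = n-2$ — but that over-shoots and is not quite an induced matching since $C$-vertices see $v$. The correct accounting is more delicate: an induced matching among the chosen stems requires that the $u$-bouquet's flower $C$-vertices not be adjacent to the stem at $v$, which fails. So instead I would choose stems $\{u, a\}$ (some $a \in A$, if $A \ne \emptyset$) and $\{v,b\}$ (some $b \in B$); these two stems form an induced matching iff $u \not\sim v$ and there is no vertex adjacent to both $a$ and $b$ — which holds since $a$'s only neighbor is $u$. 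Then $|F(\calB)| = |A| + |B|$, giving $\pdim G \ge |A|+|B|$.

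The remaining — and main — obstacle is the bookkeeping over the degenerate configurations: when $A$ or $B$ is empty, when $u \sim v$, and, crucially, squeezing out the extra $+1$ from the shared flowers $C$ and from the edge $uv$. I expect the sharp bound to come from combining $\pdim G \ge |A| + |B|$ with $\pdim G \ge \deg u = |A| + |C| + \epsilon$ and $\pdim G \ge \deg v = |B| + |C| + \epsilon$ (where $\epsilon \in \{0,1\}$ records the edge $uv$), from \eqref{eq:deg}. Since $|A| + |B| + |C| = n-2$, one gets $\pdim G \ge \max\{|A|+|B|,\ |A|+|C|+\epsilon,\ |B|+|C|+\epsilon\}$, and a short case analysis on the sizes (and on whether $u\sim v$, and on whether $C = \emptyset$) should show this maximum is always at least $\lceil n/2 \rceil$. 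The delicate corner is when $C$ is large and $A,B$ small: there one must verify that $G$ with such a configuration is still chordal-like enough, or simply use Theorem~\ref{thm:K2} directly — note a connected graph with $\tau(G)=2$ whose neighborhoods $N(u), N(v)$ induce cliques is chordal — to compute $\pdim G$ exactly via the largest minimal vertex cover. I would carry out: (1) the translation of the $C^*(n)$ condition done above; (2) the structural description of $G$; (3) the three lower bounds on $\pdim G$; (4) the case analysis yielding $\pdim G \ge \lceil n/2 \rceil$; and finish by the contrapositive. Step (4), pinning down the worst case, is where the real work lies.
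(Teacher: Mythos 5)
Your translation of the condition ($(a,n-2)\notin C^*(n)$ forces $a\ge\lfloor n/2\rfloor+1$, so you must show $\pdim G\ge\lceil n/2\rceil$) and your structural description of graphs with vertex cover number $2$ are both correct, but the proof as outlined has a genuine gap, and ironically it comes from abandoning an argument that actually works. Re-read the definition of \emph{strongly disjoint}: only the \emph{chosen} stems $s_i$, one per bouquet, need to form an induced matching of $G$ --- not every stem. So when $u\not\sim v$ and $A,B\neq\emptyset$, the set $\calB$ consisting of the bouquet rooted at $u$ with flowers $A\cup C$ and the bouquet rooted at $v$ with flowers $B$, with chosen stems $\{u,a\}$ ($a\in A$) and $\{v,b\}$ ($b\in B$), \emph{is} strongly disjoint (no edge of $G$ meets both stems, since every edge meets $\{u,v\}$ and $a,b$ have no neighbors besides $u,v$ respectively), giving $|F(\calB)|=n-2\ge\lceil n/2\rceil$ at once by Theorem~\ref{thm:K1}. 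The degenerate cases are then immediate: if $A=\emptyset$ or $B=\emptyset$ then $\deg v\ge n-2$ or $\deg u\ge n-2$ and \eqref{eq:deg} finishes; if $u\sim v$ then $\deg u+\deg v=|A|+|B|+2|C|+2\ge n$, so $\max\{\deg u,\deg v\}\ge\lceil n/2\rceil$. The fallback you substitute instead --- $\pdim G\ge\max\{|A|+|B|,\ |A|+|C|+\epsilon,\ |B|+|C|+\epsilon\}$ --- is \emph{not} sufficient: for $n=5$ with $|A|=|B|=|C|=1$ and $u\not\sim v$ (the graph is $P_5$, which does have $\alpha=3=n-2$ and $\pdim=3$), all three quantities equal $2<3=\lceil 5/2\rceil$, so the case analysis you defer to step (4) cannot close on these bounds alone. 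Since that deferred step is where you yourself locate ``the real work,'' the proposal is incomplete as it stands.

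For comparison, the paper argues by contradiction from the degree-sum inequality $n-2\le\sum_{w\in W}\deg w=|N(v_1)\cap W|+|N(v_2)\cap W|$ combined with $\deg v_i<\lceil n/2\rceil$ from \eqref{eq:deg}: this kills the case $v_1\sim v_2$ and the case $n$ even outright, and for $n$ odd pins $G$ down to a single explicit graph (two stars of equal size sharing one common leaf), which is chordal, so Theorem~\ref{thm:K2} applied to the two bouquets rooted at $v_1,v_2$ (which are only \emph{semi}-strongly disjoint --- this is why chordality is invoked) yields $\pdim G\ge n-2$, a contradiction. Your route via strongly disjoint bouquets and Theorem~\ref{thm:K1} avoids any appeal to chordality and is, once the definition is applied correctly, arguably shorter; but as written the proposal neither completes that route nor supplies a substitute.
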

\begin{proof}
Let $$f(a,b):=b+1-\left\lceil \frac{b}{n-b} \right\rceil -a.$$ 
Note that for integers $a,b$ with $1 \leq a \leq b \leq n-1$, $f(a,b) \geq 0$ holds if and only if $(a,b) \in C^*(n)$. 

In what follows, we lead a contradiction if there exists a graph $G \in \Graph(n)$ with $(\dep G, \dim G)=(a,n-2)$ such that $f(a,n-2) < 0$. 

Since $f(a,n-2)=\lfloor n/2 \rfloor -a$, we have $\lfloor n/2 \rfloor < \dep G=n- \pdim G$, i.e., $\pdim G < \lceil n/2 \rceil$.  
On the other hand, since $\dim G=n-2$, there is an independent set $W$ of $G$ with $|W|=n-2$. 
Let $v_1,v_2$ be the remaining vertices, i.e., $\{v_1,v_2\}=V(G) \setminus W$. 
Since $G$ is connected and $W$ is an independent set, we see that 
\begin{align*}
n-2 &\leq \sum_{w \in W}\deg w = |N(v_1) \cap W|+|N(v_2) \cap W| \\
&= 
\begin{cases}
\deg v_1 + \deg v_2-2, &\text{ if }\{v_1,v_2\} \in E(G), \\
\deg v_1+\deg v_2, &\text{ if }\{v_1,v_2\} \not\in E(G).
\end{cases}
\end{align*}
On the other hand, since $\pdim G < \lceil n/2 \rceil$, we see that $\deg v_i < \lceil n/2 \rceil$ for $i=1,2$ by \eqref{eq:deg}. 
If $\{v_1,v_2 \} \in E(G)$, then we see that 
$$n-2 \leq \deg v_1 + \deg v_2 -2 \leq \frac{n-1}{2}+\frac{n-1}{2} - 2 = n-3,$$ a contradiction. 
Hence, $\{v_1,v_2\} \not\in E(G)$. 

Suppose that $n$ is even. Then we see that  
$$n-2 \leq \deg v_1 + \deg v_2 \leq \frac{n-2}{2} + \frac{n-2}{2} = n-2,$$
i.e., we have $\deg w = 1$ for each $w \in W$. Since $\{v_1,v_2\} \not\in E(G)$, we see that there is no path in $G$ connecting $v_1$ and $v_2$, 
a contradiction to the connectedness of $G$. 
Hence, $n$ should be odd. Moreover, there should be at least one vertex $w_0$ in $W$ with $\deg w_0 \geq 2$. 
Since $\sum_{w \in W} \deg w \geq n-1$ and $\deg v_i \leq (n-1)/2$ hold, we conclude that $G$ should look like as follows: 
\begin{center}
\begin{figure}[h]
\begin{tikzpicture}[scale=0.8]
      \tikzset{enclosed/.style={draw, circle, inner sep=0pt, minimum size=.15cm, fill=black},close/.style={draw, circle, inner sep=0pt, minimum size=.02cm, fill=black}}

      \node[enclosed, label={below: $v_1$}] (A) at (4,0) {};
      \node[enclosed, label={below: $v_2$}] (B) at (7,0) {};
      \node[enclosed] (C) at (8,2) {};
      \node[enclosed] (D) at (3,2) {};
      \node[enclosed] (E) at (2,0.5) {};
      \node[enclosed] (F) at (9,0.5) {};
      \node[enclosed] (G) at (8.5,-1) {};
      \node[enclosed] (H) at (2.5,-1) {};
      \node[enclosed, label={below: $w_0$}] (I) at (5.5,1) {};
      \node[close] (J) at (2.2,0.7) {};
      \node[close] (K) at (2.4,1.0) {};
      \node[close] (P) at (2.6,1.3) {};
      \node[close] (L) at (2.8,1.6) {};
      \node[close] (M) at (8.1,1.7) {};
      \node[close] (N) at (8.3,1.4) {};
      \node[close] (O) at (8.5,1.1) {};
      \node[close] (Q) at (8.7,0.8) {};
        
      \draw (A) -- (I);
      \draw (B) -- (I);
      \draw (B) -- (C); 
      \draw (A) -- (D); 
      \draw (A) -- (E); 
      \draw (B) -- (F);
      \draw (B) -- (G);
      \draw (A) -- (H);
\end{tikzpicture}
\end{figure}
\end{center}
Note that $\deg v_1=\deg v_2=(n-1)/2$. 
In particular, $G$ is chordal. By taking a set of bouquets consisting of two bouquets whose roots are $v_1$ and $v_2$, 
we see from Theorem~\ref{thm:K2} that $$\left\lceil \frac{n}{2} \right\rceil > \pdim G \geq n-2,$$ 
a contradiction to $n \geq 4$. 
\end{proof}

\begin{prop}\label{prop:n-3}
Let $n \geq 8$. If $(a,n-3) \not\in C^*(n)$, then $(a,n-3) \not\in \gdd(n)$. 
\end{prop}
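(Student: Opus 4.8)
The plan is to mimic the structure of the proof of Proposition~\ref{prop:n-2}, but now with the three ``remaining'' vertices $v_1,v_2,v_3 := V(G)\setminus W$, where $W$ is an independent set of size $n-3$. Setting $f(a,b):=b+1-\lceil b/(n-b)\rceil - a$ as before, one computes $f(a,n-3)=\lceil (n-3)/2\rceil + 2 - a$ (since $n-(n-3)=3$, so $\lceil (n-3)/3\rceil$ must be tracked carefully; one checks $\lceil(n-3)/3\rceil$ versus $n-3$ to get the exact value), and the hypothesis $(a,n-3)\notin C^*(n)$ translates into a lower bound $\pdim G < n-a \le \lceil n/2\rceil$ or so, hence by \eqref{eq:deg} every vertex — in particular each $v_i$ — has degree strictly less than that bound. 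First I would record the connectedness inequality: since $W$ is independent, every edge of $G$ either joins some $v_i$ to $W$ or joins two of the $v_i$'s, so $\sum_{w\in W}\deg w = \sum_{i=1}^3 |N(v_i)\cap W|$, and connectedness of $G$ forces $\sum_{w\in W}\deg w \ge n-3$ together with the requirement that the graph induced on $\{v_1,v_2,v_3\}$ plus the ``attachment pattern'' of $W$ be connected.

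Next I would split into cases according to how many edges lie among $\{v_1,v_2,v_3\}$ (none, one, two forming a path, or all three forming a triangle) and, in parallel, how the $n-3$ vertices of $W$ distribute their neighbors among $v_1,v_2,v_3$. In each case the degree bound $\deg v_i < \lceil n/2\rceil$ (or the sharper bound coming from $f(a,n-3)<0$) combined with $\sum_i\deg v_i \ge (n-3) + 2\cdot(\#\text{edges inside }\{v_1,v_2,v_3\})$ and the connectedness constraint should yield a numerical contradiction for $n$ large, leaving only a bounded list of ``extremal'' configurations. For those extremal $G$, exactly as in Proposition~\ref{prop:n-2}, one argues that $G$ is chordal (it is a union of at most three stars centered at the $v_i$ glued along a small connected pattern, which has a perfect elimination ordering: eliminate the leaves in $W$ first), and then applies Theorem~\ref{thm:K2}: taking the semi-strongly disjoint (indeed strongly disjoint, after choosing the $v_i$ pairwise non-adjacent in the surviving cases) set of bouquets rooted at the $v_i$ gives $\pdim G \ge |F(\calB)| = \sum_i \deg v_i \ge n-3$, contradicting the upper bound $\pdim G < \lceil n/2\rceil$ once $n\ge 8$ (this is presumably where the hypothesis $n\ge 8$ is used, to make $\lceil n/2\rceil \le n-3$ fail the wrong way — i.e. to make $n-3 \ge \lceil n/2 \rceil$ a contradiction, we need $n\ge 6$, but the extra slack handles the small extremal cases).

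The main obstacle I anticipate is the case analysis bookkeeping: with three free vertices rather than two there are several qualitatively different attachment patterns for $W$ (e.g. some $w\in W$ adjacent to two or all three of the $v_i$, which affects both the connectedness count and whether a minimal vertex cover / strongly disjoint bouquet argument gives the clean bound $\sum\deg v_i$), and one must be careful that when two of the $v_i$ are adjacent the bouquet argument still produces enough flowers — here one should instead use the minimal vertex cover characterization in Theorem~\ref{thm:K2} and exhibit a large minimal vertex cover, which is cleaner than juggling induced matchings. A secondary technical point is pinning down $\lceil (n-3)/3\rceil$ and hence the exact threshold in $f(a,n-3)<0$ for $n$ in the various residue classes mod $3$ and mod $2$; I would handle this by treating $\lceil b/(n-b)\rceil = \lceil (n-3)/3\rceil$ explicitly as a small function of $n\bmod 3$, which is harmless since the final contradiction only needs an inequality, not the exact value. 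Modulo this bookkeeping, the skeleton — degree bound from $\pdim$, connectedness sum, reduction to finitely many chordal extremal graphs, then Theorem~\ref{thm:K2} — is exactly parallel to Proposition~\ref{prop:n-2} and should go through for $n\ge 8$.
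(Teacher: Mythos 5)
There is a genuine gap, and it starts with the numerology. Since $n-b=3$ here, the correct computation is $f(a,n-3)=\lfloor 2n/3\rfloor-a-1$, so the hypothesis $f(a,n-3)<0$ forces $\pdim G\le\lceil n/3\rceil$ and hence, via \eqref{eq:deg}, $\deg v_i\le\lceil n/3\rceil$ --- not ``$<\lceil n/2\rceil$ or so'' as you write. This is not a cosmetic slip: with the bound $\lceil n/2\rceil$ your degree-sum inequality $\sum_i\deg v_i\ge (n-3)+2\cdot(\#\text{edges among the }v_i)$ yields no contradiction at all for large $n$ (three vertices of degree up to $n/2$ cover $n-3$ leaves with room to spare), so the promised ``reduction to a bounded list of extremal configurations'' never happens. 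Even with the correct bound $\lceil n/3\rceil$, the degree-sum count only kills the cases where $\{v_1,v_2,v_3\}$ spans a triangle or a path; when there is one edge or no edge among $v_1,v_2,v_3$, the inequality $3\lceil n/3\rceil\ge n-3$ is satisfied for every $n$, and the surviving graphs form an unbounded family, not a finite list.

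The second, related, gap is your treatment of those surviving cases. You propose to argue that they are chordal and then apply Theorem~\ref{thm:K2}; but a vertex $w\in W$ may be adjacent to two of the $v_i$ while another $w'$ is adjacent to the same two, producing an induced $4$-cycle, so these graphs are in general not chordal and Theorem~\ref{thm:K2} is unavailable. What is actually needed --- and what the paper does --- is to use the lower bound of Theorem~\ref{thm:K1}, valid for arbitrary graphs: one constructs explicit \emph{strongly disjoint} sets of bouquets whose flower count exceeds $\lceil n/3\rceil$, and the construction depends on a further case split on how the neighborhoods $N(v_i)\cap W$ overlap (e.g.\ whether $N(v_3)\setminus N(v_2)=\emptyset$, or whether some pair of the three natural bouquets already admits an induced matching of stems). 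None of this overlap analysis appears in your sketch, and it is the heart of the argument; the part of your plan that does work (the triangle and path cases via the degree sum) is the easy half.
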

\begin{proof}
Let $f(a,b)$ be the same thing as above. Then $f(a,n-3)=\lfloor 2n/3 \rfloor -a-1$. 

Suppose that there exists a graph $G \in \Graph(n)$ with $(\dep G, \dim G)=(a,n-3)$ such that $f(a,n-3) < 0$. 
Since $\dim G=n-3$, there is an independent set $W$ with $|W|=n-3$. Let $v_1,v_2,v_3$ be the remaining vertices. 
By \eqref{eq:deg}, we have $\deg v_i \leq n-a < n- \lfloor 2n/3 \rfloor +1 = \lceil n/3 \rceil +1$, i.e., 
\begin{align}\label{eq:deg3}
\deg v_i \leq \left\lceil \frac{n}{3} \right\rceil \text{ for }i=1,2,3.
\end{align}
Similarly, for any strongly disjoint set $\calB$ of bouquets of $G$, we have 
\begin{align}\label{eq:bouquet}|F(\calB)| \leq \left\lceil \frac{n}{3} \right\rceil.\end{align}

Here, we have the following four possibilities on the adjacencies of $v_1,v_2,v_3$: 
\begin{center}
\begin{tikzpicture}
\tikzset{enclosed/.style={draw, circle, inner sep=0pt, minimum size=.15cm, fill=black},scale=0.35}
\node[enclosed, label={above, xshift=-0.1cm, yshift=1cm:(i) }, label={below: $v_1$}] (A) at (4,0) {};
\node[enclosed, label={above, yshift=.2cm: }, label={below: $v_2$}] (B) at (7,0) {};
\node[enclosed, label={above, yshift=.2cm:}, label={above: $v_3$}] (C) at (5.5,2.59) {};

\draw (A) -- (B);
\draw (A) -- (C); 
\draw (B) -- (C); 
\end{tikzpicture}
\hspace{0.8cm}
\begin{tikzpicture}
\tikzset{enclosed/.style={draw, circle, inner sep=0pt, minimum size=.15cm, fill=black},scale=0.35}
\node[enclosed, label={below: $v_1$}, label={above, xshift=-0.2cm, yshift=1cm:(ii)}] (A) at (4,0) {};
\node[enclosed, label={below: $v_2$}] (B) at (7,0) {};
\node[enclosed, label={below: $v_3$}] (C) at (10,0) {};
\draw (A) -- (B);
\draw (A) -- (C); 
\draw (B) -- (C); 
\end{tikzpicture}
\hspace{0.8cm}
\begin{tikzpicture}
\tikzset{enclosed/.style={draw, circle, inner sep=0pt, minimum size=.15cm, fill=black},scale=0.35}
\node[enclosed, label={above, xshift=-0.1cm,yshift=1cm:(iii) }, label={below: $v_1$}] (A) at (4,0) {};
\node[enclosed, label={above, yshift=.2cm: }, label={below: $v_2$}] (B) at (7,0) {};
\node[enclosed, label={above, yshift=.2cm:}, label={below: $v_3$}] (C) at (10,0) {};
\draw (A) -- (B);
\end{tikzpicture}
\hspace{0.8cm}
\begin{tikzpicture}
\tikzset{enclosed/.style={draw, circle, inner sep=0pt, minimum size=.15cm, fill=black},scale=0.35}
\node[enclosed, label={above, xshift=-0.1cm, yshift=1cm:(iv) }, label={below: $v_1$}] (A) at (4,0) {};
\node[enclosed, label={above, yshift=.2cm: }, label={below: $v_2$}] (B) at (7,0) {};
\node[enclosed, label={above, yshift=.2cm:}, label={below: $v_3$}] (C) at (10,0) {};
\end{tikzpicture}
\end{center}
We divide our discussions into these four cases. 

(i) In this case, we see from \eqref{eq:deg3} that 
$$n-3 \leq \sum_{w \in W} \deg w = \sum_{i=1}^3 (\deg v_i -2) \leq 3\left( \left\lceil \frac{n}{3} \right\rceil - 2\right) \leq 3 \left( \frac{n+2}{3}-2\right) = n-4,$$ 
a contradiction. 

(ii) By the similar computation to (i), we have that 
\begin{align*}
n-3 \leq \sum_{w \in W} \deg w = \sum_{i=1}^3 \deg v_i - 4 \leq 3 \left\lceil \frac{n}{3} \right\rceil - 4 = \begin{cases}
n-4 \;\;\text{ if }n \equiv 0 \pmod{3}, \\
n-3 \;\;\text{ if }n \equiv 2 \pmod{3}, \\ 
n-2 \;\;\text{ if }n \equiv 1 \pmod{3}. 
\end{cases}
\end{align*}
Thus, we get the following possibilities: 
\begin{itemize}
\item $n \equiv 2 \pmod{3}$ and $\deg w = 1$ for each $w \in W$; 
\item $n \equiv 1 \pmod{3}$ and $\deg w = 1$ for each $w \in W$; 
\item $n \equiv 1 \pmod{3}$, $\deg w_0=2$ for some $w_0 \in W$ and $\deg w = 1$ for each $w \in W \setminus \{w_0\}$.  
\end{itemize}
In the first two cases (resp. the third case), $G$ should look like the left-most one (resp. the central one or the right-most one) of the following figure: 
\begin{center}
\begin{tikzpicture}
      \tikzset{enclosed/.style={draw, circle, inner sep=0pt, minimum size=.15cm, fill=black},close/.style={draw, circle, inner sep=0pt, minimum size=.02cm, fill=black},scale=0.5}
      \node[enclosed, label={below, yshift=-.2cm: $v_1$}] (A) at (4,0) {};
      \node[enclosed, label={below, yshift=-.2cm: $v_2$}] (B) at (7,0) {};
      \node[enclosed, label={below, yshift=-.2cm:$v_3$}] (C) at (10,0) {};
      \node[enclosed] (D) at (3,3) {};
      \node[enclosed] (E) at (5,3) {};
      \node[enclosed] (F) at (6,3) {};
      \node[enclosed] (G) at (8,3) {};
      \node[enclosed] (H) at (9,3) {};
      \node[enclosed] (I) at (11,3) {};
       \node[close] (J) at (3.5,3) {};
      \node[close] (K) at (4,3) {};
      \node[close] (L) at (4.5,3) {};
      \node[close] (M) at (6.5,3) {};
      \node[close] (N) at (7,3) {};
      \node[close] (O) at (7.5,3) {};
      \node[close] (P) at (9.5,3) {};
      \node[close] (Q) at (10,3) {};
      \node[close] (R) at (10.5,3) {};

      \draw (A) -- (D);
      \draw (A) -- (E); 
      \draw (B) -- (F); 
      \draw (B) -- (G); 
      \draw (C) -- (H);
      \draw (C) -- (I);
      \draw (A) -- (B);
      \draw (B) -- (C);
\end{tikzpicture}
\hspace{0.5cm}
\begin{tikzpicture}
      \tikzset{enclosed/.style={draw, circle, inner sep=0pt, minimum size=.15cm, fill=black},close/.style={draw, circle, inner sep=0pt, minimum size=.02cm, fill=black},scale=0.5}

      \node[enclosed, label={below: $v_1$}] (A) at (4,0) {};
      \node[enclosed, label={above: $v_2$}] (B) at (7,0) {};
      \node[enclosed, label={below: $v_3$}] (C) at (10,0) {};
      \node[enclosed, label={above: $w_0$}] (W) at (7,3) {};
      \node[enclosed] (D) at (3,3) {};
      \node[enclosed] (E) at (5,3) {};
      \node[enclosed] (F) at (6,-3) {};
      \node[enclosed] (G) at (8,-3) {};
      \node[enclosed] (H) at (9,3) {};
      \node[enclosed] (I) at (11,3) {};
      \node[enclosed] (J) at (7,3) {};
      \node[close] (K) at (3.5,3) {};
      \node[close] (L) at (4,3) {};
      \node[close] (M) at (4.5,3) {};
      \node[close] (N) at (6.5,-3) {};
      \node[close] (O) at (7,-3) {};
      \node[close] (P) at (7.5,-3) {};
      \node[close] (Q) at (9.5,3) {};
      \node[close] (R) at (10,3) {};
      \node[close] (S) at (10.5,3) {};
      
      \draw (A) -- (D);
      \draw (A) -- (E); 
      \draw (B) -- (F); 
      \draw (B) -- (G); 
      \draw (C) -- (H);
      \draw (C) -- (I);
      \draw (A) -- (B);
      \draw (B) -- (C);
      \draw (A) -- (J);
      \draw (J) -- (C);
\end{tikzpicture}
\hspace{0.5cm}
\begin{tikzpicture}
      \tikzset{enclosed/.style={draw, circle, inner sep=0pt, minimum size=.15cm, fill=black},close/.style={draw, circle, inner sep=0pt, minimum size=.02cm, fill=black},scale=0.5}

      \node[enclosed, label={below: $v_1$}] (A) at (4,0) {};
      \node[enclosed, label={below, xshift=0.4cm: $v_2$}] (B) at (7,0) {};
      \node[enclosed, label={below,:$v_3$}] (C) at (10,0) {};
      \node[enclosed, label={above: $w_0$}] (W) at (7,3) {};
      \node[enclosed] (D) at (3,3) {};
      \node[enclosed] (E) at (5,3) {};
      \node[enclosed] (F) at (6,-3) {};
      \node[enclosed] (G) at (8,-3) {};
      \node[enclosed] (H) at (9,3) {};
      \node[enclosed] (I) at (11,3) {};
      \node[enclosed] (J) at (7,3) {};
      \node[close] (K) at (3.5,3) {};
      \node[close] (L) at (4,3) {};
      \node[close] (M) at (4.5,3) {};
      \node[close] (N) at (6.5,-3) {};
      \node[close] (O) at (7,-3) {};
      \node[close] (P) at (7.5,-3) {};
      \node[close] (Q) at (9.5,3) {};
      \node[close] (R) at (10,3) {};
      \node[close] (S) at (10.5,3) {};
      
      \draw (A) -- (D);
      \draw (A) -- (E); 
      \draw (B) -- (F); 
      \draw (B) -- (G); 
      \draw (C) -- (H);
      \draw (C) -- (I);
      \draw (A) -- (B);
      \draw (B) -- (C);
      \draw (A) -- (J);
      \draw (J) -- (B);
\end{tikzpicture}
\end{center}
Note that we have $|N(v_i) \cap W| \geq \lceil n/3 \rceil -2 > 1$ for $i=1,3$ and $|(N(v_1) \cup N(v_3)) \cap W| \geq 2 \lceil n/3 \rceil -3$ in any cases. 
Hence, we can always take a strongly disjoint set of bouquets $\calB$ consisting of two bouquets whose roots are $v_1$ and $v_3$ 
satisfying $|F(\calB)| \geq 2 \lceil n/3 \rceil -3+1.$
(Note that ``$+1$'' comes from a stem $\{v_1,v_2\}$.) By \eqref{eq:bouquet}, we obtain that 
$$\left\lceil \frac{n}{3} \right\rceil \geq 2 \left\lceil \frac{n}{3} \right\rceil -2 \;\Longleftrightarrow\; 2 \geq \left\lceil \frac{n}{3} \right\rceil 
\;\Longleftrightarrow\; 6 \geq n,$$
a contradiction. 

(iii) Take bouquets $B_1,B_2,B_3$ of $G$ whose roots are $v_1,v_2,v_3$, respectively, such that 
$V(B_i) \cap V(B_j)=\emptyset$ ($i \neq j$) and $F(\{B_1,B_2,B_3\})=W$ hold. 
\begin{itemize}
\item Assume that $N(v_3) \setminus N(v_2) \neq \emptyset$. Let $w \in N(v_3) \setminus N(v_2)$. 
Then, by taking a strongly disjoint set $\calB'$ of bouquets consisting of $B_2'$ and a bouquet $\{v_3,w\}$, 
where $B_2'$ is constructed by attaching a new flower $v_1$ to $B_2$, we see from \eqref{eq:bouquet} that \begin{align*}
\left\lceil \frac{n}{3} \right\rceil \geq |F(\calB')| = |F(B_2)|+2 = n-3-|F(\{B_1,B_3\})|+2 \geq n-1-\left\lceil \frac{n}{3} \right\rceil. 
\end{align*}
Hence, we obtain the inequality $$\left\lceil \frac{n}{3} \right\rceil \geq n-1-\left\lceil \frac{n}{3} \right\rceil,$$ i.e., $n \leq 7$, a contradiction. 
\item Assume that $N(v_3) \setminus N(v_2)=\emptyset$, i.e., $N(v_3) \subset N(v_2)$. 
Take $w \in N(v_3)$ and let $B'$ be a new bouquet with its root $w$ and flowers $v_2,v_3$. 
Then $\{B_1,B'\}$ becomes strongly disjoint. Thus, by \eqref{eq:bouquet}, we have 
$$\left\lceil \frac{n}{3} \right\rceil \geq |F(\{B_1,B'\})| = |F(B_1)|+2 \geq n-3-|F(\{B_2\})|+2 \geq n-1-\left\lceil \frac{n}{3} \right\rceil,$$ 
a contradiction. 
\end{itemize}

(iv) Take bouquets $B_1,B_2,B_3$ of $G$ whose roots are $v_1,v_2,v_3$, respectively, such that 
$V(B_i) \cap V(B_j)=\emptyset$ ($i \neq j$) and $F(\{B_1,B_2,B_3\})=W$ hold. 
If the situation (iv) happens, then there are $j,k$ such that $\{B_j,B_k\}$ is strongly disjoint. 
In fact, if not, then either $F(B_j) \subset F(B_k)$ or $F(B_k) \subset F(B_j)$ holds for each $1 \leq j<k \leq 3$. 
This implies that we have $F(B_1) \subset F(B_2) \subset F(B_3)$ after properly changing the indices of $B_1,B_2,B_3$. 
In particular, $F(B_3)=W$. Thus, $n-3=\deg v_3 \leq \lceil n/3 \rceil \leq (n+2)/3$, i.e., $n \leq 5$, a contradiction. 

Let, say, $\calB=\{B_1,B_2\}$ be a strongly disjoint set of bouquets of $G$. 
\begin{itemize}
\item Suppose that $|F(\calB)| \geq \lfloor 2n/3 \rfloor -1$. Then we see that 
$$\left\lceil \frac{n}{3} \right\rceil \geq |F(\calB)| \geq \left\lfloor \frac{2n}{3} \right\rfloor -1 \;\Longrightarrow\;
\frac{n+2}{3} \geq \frac{2n-2}{3} -1 \;\Longleftrightarrow\; 7 \geq n,$$
a contradiction. 
\item Suppose that $|F(\calB)| \leq \lfloor 2n/3 \rfloor -2$. Since $|W|=|F(\calB)|+|F(B_3)|=n-3$ and $|F(B_3)| \leq \deg v_3-1$ 
by the connectedness of $G$, we see that 
$$\left\lceil \frac{n}{3} \right\rceil - 1 \geq \deg v_3 - 1\geq |F(B_3)| \geq n-3 - \left\lfloor \frac{2n}{3} \right\rfloor +2 =\left\lceil \frac{n}{3} \right\rceil -1. $$
Hence, all the equalities of these inequalities are satisfied. In particular, we have $|F(B_3)|= \deg v_3 -1 = \lceil n/3 \rceil-1$. 
Since $n-3 > \lceil n/3 \rceil$ and $G$ is connected, there is $w \in W \setminus N(v_3)$ adjacent to $v_1$ or $v_2$, say, $v_1$.  
Then we can construct a new strongly disjoint set of bouquets $\calB'$ consisting of bouquets $N(v_3) \cup \{v_3\}$ and $\{v_1,w\}$ 
such that $|F(\calB')| = \lceil n/3 \rceil +1$, a contradiction to \eqref{eq:bouquet}.  
\end{itemize}
\end{proof}

By using Propositions~\ref{prop:CM}, \ref{prop:n-2} and \ref{prop:n-3}, we can prove Corollary~\ref{cor:small}. 
\begin{proof}[Proof of Corolalry~\ref{cor:small}]
It is enough to show that $\gdd(n) \subset C^*(n)$ for $n \leq 12$. 

Take $(a,b) \in \gdd(n)$ arbitrarily. Then the inequalities $1 \leq a \leq b \leq n-1$ automatically hold. 
\begin{itemize}
\item If $a=b$, then we see that $(a,b) \in C^*(n)$ by Proposition~\ref{prop:CM}. 
\item The case of $b=n-1$ has been already discussed in Remark~\ref{rem:star}. 
\end{itemize}

Hence, in what follows, we assume that $1 \leq a<b \leq n-2$. 
Here, we consider the set $$C'(n):=\{(a,b) \in \ZZ^2 : 1 \leq a < b \leq n-2\} \setminus C^*(n).$$ 
Then the direct computations show the following: 
\begin{align*}
&C'(n)=\emptyset \text{ for }2 \leq n \leq 6, \;\; C'(7)=\{(4,5)\}, \;\; C'(8)=\{(5,6)\}, \;\; C'(9)=\{(5,7),(6,7)\}, \\
&C'(10)=\{(6,7),(6,8),(7,8)\}, \;\; C'(11)=\{(6,9),(7,8),(7,9),(8,9)\}, \\ 
&C'(12)=\{(7,10),(8,9),(8,10),(9,10)\}. 
\end{align*}

Let $n \leq 12$. 
Suppose that $(a,b) \not\in C^*(n)$. Then $(a,b) \in C'(n)$. 
By the above computations, we see that all elements of $\bigcup_{n=2}^{12} C'(n)$ satisfy $b=n-2$ or $b=n-3$. 
Hence, it follows from Propositions~\ref{prop:n-2} and \ref{prop:n-3} that $(a,b) \not\in \gdd(n)$, a contradiction. 

Therefore, $C^*(n)=\gdd(n)$ holds if $n \leq 12$. 
\end{proof}

\begin{rem}
We see that $$C'(13)=\{(7,11),(8,9),(8,10),(8,11),(9,10),(9,11),(10,11)\}.$$ 
Since we cannot claim that $(8,9) \not\in \gdd(13)$ only by Propositions~\ref{prop:n-2} and \ref{prop:n-3}, 
the above proof of Corollary~\ref{cor:small} is not available if $n=13$. 
Instead, we can claim that $\gdd(13) = C^*(13)$ or $C^*(13) \cup \{(8,9)\}$. 
\end{rem}

\begin{ex}\label{ex:C^*}
For small $n$'s, the direct computations show the following: 
\begin{align*}
C^*(n)&=C^-(n) \text{ for }n=2,3,\ldots,8,10, \\ 
C^*(9)&=C^-(9) \cup \{(5,6)\}, \\ 
C^*(11)&=C^-(11) \cup \{(6,6),(6,7),(6,8)\}, \\ 
C^*(12)&=C^-(12) \cup \{(7,8),(7,9),(8,9)\}. 
\end{align*}
Note that \cite[Example 2.11]{HKKMV} also mentions the similar equalities in the case $3 \leq n \leq 9$. 
\end{ex}

\bigskip

\section{In the case of chordal graphs}\label{sec:main2}
In this section, we discuss the behavior of pairs of dimensions and depths of edge ideals of chordal graphs.

\subsection{The inclusion $C^*(n) \subset \gddcc(n)$}

First, we discuss the inclusion $C^*(n) \subset \gddcc(n)$. 
Here, we remark that since we only know that $\gddcc(n) \subset \gdd(n)$ by definition, 
we cannot immediately claim $C^*(n) \subset \gddcc(n)$ from Theorem~\ref{thm:inc}. 

\medskip

Let $(a,b) \in C^*(n)$. We divide the discussions into two cases: 
\begin{itemize}
\item[(i)] $a+b > n$; or 
\item[(ii)] $a+b \leq n$. 
\end{itemize}

(i) In this case, we can apply the same proof as that of \cite[Theorem 2.8]{HKKMV}. 
In that proof, the graphs $G(m;s_1,\ldots,s_m)$ given in \cite[Construction 2.5]{HKKMV} play the essential role 
and we see that $G(m;s_1,\ldots,s_m)$ is chordal. Hence, we obtain the inclusion by the same discussion. 

(ii) In this case, we cannot apply the same proof. In fact, Lemmas 2.3 and 2.4 are used for the proof in the case $a+b \leq n$. 
Especially, Lemma 2.3 uses the technique of {\em $S$-cone} (also known as $S$-suspension). 
In general, the chordality is not preserved by taking $S$-cone. Hence, Lemma 2.3 is not immediately available for chordal graphs. 

Instead, we directly prove the following: 
\begin{prop}
Let $a,b,n$ be integers satisfying $1 \leq a \leq b \leq n-1$ and $a+b \leq n$. 
Then there exists a chordal graph $G \in \Graph(n)$ such that $(\dep G,\dim G) = (a,b)$. 
Namely, we have $(a,b) \in \gddcc(n)$. 
\end{prop}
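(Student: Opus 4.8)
We want a connected chordal graph on $n$ vertices with $\dim G = \alpha(G) = b$ and $\dep G = a$, equivalently (by Auslander--Buchsbaum) $\pdim G = n-a$. Since $a+b \le n$, we have $n - a \ge b$, so we are asking for a chordal graph whose independence number is $b$ and whose largest minimal vertex cover (which equals $\pdim G$ by Theorem~\ref{thm:K2}) has size $n-a$. A natural candidate: take $p := n-b$ "center" vertices and attach pendant vertices (flowers) to them so that the total vertex count is $n$; arrange the centers in a path so the graph is connected. This is a tree, hence chordal, and an independent set of maximum size is obtained by taking all the flowers, which gives $\alpha(G) = b$ provided every center has at least one flower and $p \le b$ (true since $p = n-b \le a \le b$).

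First I would set up the construction precisely. Let $p = n-b$ and distribute the remaining $n - p = b$ vertices as flowers among the $p$ centers $c_1,\dots,c_p$, say with $c_i$ carrying $d_i \ge 1$ flowers, $\sum d_i = b$; connect $c_i$ to $c_{i+1}$ for $i = 1,\dots,p-1$. The number of flowers we can "allocate freely" is $b - p \ge 0$, and I will choose the $d_i$ to tune the depth. The key computation is $\pdim G$ via the minimal-vertex-cover formula of Theorem~\ref{thm:K2}. A minimal vertex cover must contain, for each center $c_i$ with $d_i \ge 1$, either $c_i$ or all $d_i$ of its flowers; to maximize size we pick the flowers when $d_i \ge 2$ (and also we may need $c_i$ to cover the path edges). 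I would verify that the maximum minimal vertex cover has size $n - p - (\text{number of centers with } d_i = 1) + (\text{correction for path edges})$ — this is the step requiring care — and then choose the multiset $\{d_i\}$ so that this size equals exactly $n - a$. Concretely, putting all excess flowers on one center and giving the rest a single flower each should let the maximum minimal vertex cover size range over the needed interval as a function of how the $d_i$ are chosen.

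**The main obstacle I anticipate is the bookkeeping for $\pdim G$**, i.e., checking that the largest minimal vertex cover has exactly the size I want and that it is achievable while keeping the graph connected. The subtlety is the path edges $\{c_i, c_{i+1}\}$: a minimal vertex cover that takes all flowers of $c_i$ rather than $c_i$ itself must still cover $\{c_i,c_{i+1}\}$, forcing $c_{i+1}$ in, which constrains neighboring choices. A clean way around this: use a \emph{spider}/\emph{broom} instead of a path of centers — attach all $p$ centers to a single fixed center $c_1$, so the "backbone" is a star on the centers. Then covering the backbone edges only forces $c_1$ (or all the $c_i$, $i \ge 2$, which is worse), and the analysis decouples nicely. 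With this shape I expect: taking all flowers of every $c_i$ with $d_i \ge 2$, plus $c_1$, plus for each $c_i$ ($i\ge 2$) with $d_i = 1$ either $c_i$ or its single flower, gives a minimal vertex cover, and its maximum size is straightforwardly a function of the number of "fat" centers.

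**Finally I would assemble the argument.** After fixing the construction, (1) note $G$ is a tree, hence chordal; (2) compute $\alpha(G) = b$ by exhibiting the independent set of all flowers and checking no larger one exists (any independent set omits at least one vertex from each center--flower adjacency and from the backbone, and a short counting argument caps it at $b$); (3) compute $\pdim G = n - a$ via Theorem~\ref{thm:K2} by the vertex-cover analysis above, choosing the flower-distribution $d_1,\dots,d_p$ to hit the target — the feasibility of hitting every value of $\pdim$ in the range $[b, n-1]$ reduces to the elementary fact that moving one flower from a fat center to create a new singleton center changes the maximum minimal vertex cover size by exactly $1$; (4) conclude $\dep G = n - \pdim G = a$. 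I would also double-check the boundary cases $a = b$ (all $d_i$ as equal as possible, no excess) and $b = n-1$ (then $p=1$, $G$ is a single star, $\pdim = n-1$, $\dep = 1 = a$, matching Remark~\ref{rem:star}) to make sure the construction degenerates correctly.
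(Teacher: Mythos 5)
Your construction cannot work, for two independent reasons. First, your justification that $p\le b$ reads the hypothesis backwards: $a+b\le n$ gives $a\le n-b=p$, not $p\le a$, so when $b<n/2$ you have more centers than flowers; in fact no tree on $n$ vertices has independence number $b<\lceil n/2\rceil$ at all, since a tree is bipartite and the larger side of its bipartition is independent. Second, and more decisively, even when $b\ge n/2$ a tree cannot reach the small depths required. By Theorem~\ref{thm:K2}, together with the fact that complements of minimal vertex covers are exactly the maximal independent sets, a chordal graph satisfies $\dep G=\min\{|I|: I \text{ a maximal independent set}\}$. For a tree this minimum equals $1$ only when a single vertex dominates all others, i.e.\ only for a star; so the pair $(1,b)$ with $b<n-1$ is unattainable by any tree (try $n=6$, $(a,b)=(1,4)$: the only trees with $\alpha=4$ are double stars, whose depth is $2$). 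Concretely, for your spider the largest minimal vertex cover has size $\max\bigl(d_1+p-1,\;1+b-d_1\bigr)\le b$, so $\pdim G\le b$ and $\dep G\ge n-b$; since the range you must cover is $1\le a\le n-b$, your construction realizes only the single extremal pair $a=n-b$, and your claim that redistributing flowers lets $\pdim$ sweep the interval $[b,n-1]$ is false.

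The fix is to abandon bipartiteness. The paper's construction is a split graph: a clique $U$ of size $n-b$ together with an independent set $W$ of size $b$, where $a-1$ clique vertices are matched to $a-1$ private vertices of $W$ and the remaining $n-b-a+1$ clique vertices are joined completely to the remaining $b-a+1$ vertices of $W$. The clique forces every minimal vertex cover to contain at least $|U|-1$ vertices of $U$, which pushes the largest minimal vertex cover up to $|U|+(b-a)=n-a$ while keeping $\alpha=b$; chordality is checked via a perfect elimination ordering. This non-bipartite backbone is exactly the feature your tree lacks; if you want to salvage your plan, replace the path or star on your centers by a complete graph on them.
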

Note that the assumption $a+b \leq n$ immediately implies $(a,b) \in C^*(n)$ (see Remark~\ref{rem:C^-}). 
\begin{proof}
Let $G$ be the graph defined as follows: 
\begin{align*}
V(G)&=U \sqcup W, \;\text{ where }\; U=\{u_1,\ldots,u_{n-b}\} \text{ and }W=\{w_1,\ldots,w_b\}; \\
E(G)&=\{\{u,u'\} : u,u' \in U\} \cup \{\{u_i,w_i\} : 1 \leq i \leq a-1\} \\
&\quad\quad\quad\quad\quad\quad\quad\quad\quad\quad\cup \{\{u_j,w_k\} : a \leq j \leq n-b, \; a \leq k \leq b\}. 
\end{align*}
It is straightforward to check that the ordering $w_1,\ldots,w_b,u_1,\ldots,u_{n-b}$ becomes a perfect elimination ordering of $G$. Hence, $G$ is chordal. 
Moreover, we can easily see that $W$ is an independent set with $|W|=\alpha(G)$. Hence, $\dim G=b$. 

In what follows, we prove that $\dep G=a$, i.e., $\pdim G=n-a$. 

Regarding the minimal vertex covers of $G$, we observe that any minimal vertex cover contains at least $(n-b-1)$ vertices of $U$ since $U$ forms a clique. 
In particular, we see that $U$ itself becomes a minimal vertex cover. 
Fix $u_j \in U$ and let $C$ be a minimal vertex cover not containing $u_j$. Then $U \setminus \{u_j\} \subset C$. 
\begin{itemize}
\item If $1 \leq j \leq a-1$, then $w_j \in C$, so $C=(U\setminus \{u_j\}) \cup \{w_j\}$. 
\item If $a \leq j \leq n-b$, then $w_a,\ldots,w_b \in C$, so $C=(U\setminus \{u_j\}) \cup \{w_a,\ldots,w_b\}$. 
\end{itemize}
Therefore, by Theorem~\ref{thm:K2}, we conclude that 
$$\pdim G= \max\{|C| : C\text{ is a minimal vertex cover of }G\}=\max\{|U|,|U|+b-a\}=n-a.$$
\end{proof}

\subsection{The inclusion $\gddcc(n) \subset C^*(n)$}

For the proof of this inclusion, we prepare the following lemma. 
\begin{lem}\label{lem:chordal}
{\em (1)} For any vertex $v$ of a graph $G$, we have 
$$\dim G = \max\{\dim (G \setminus N[v])+1, \dim (G\setminus v)\}.$$
{\em (2)} Let $G$ be a chordal graph and let $v_n,\ldots,v_1$ be a perfect elimination ordering of $G$. Then we have 
$$\dep G =\min\{ \dep (G \setminus N[v_n])+1, \dep (G \setminus v_n)\}.$$
\end{lem}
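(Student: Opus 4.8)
The plan is to prove the two identities by relating independent sets (resp.\ minimal vertex covers) of $G$ to those of the two deletions $G\setminus v$ and $G\setminus N[v]$, and then to translate these into statements about $\dim$ and $\dep$ using $\dim G=\alpha(G)$ and, for part (2), the combinatorial description $\pdim G=\max\{|C|:C\text{ minimal vertex cover of }G\}$ from Theorem~\ref{thm:K2}.

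For part (1), I would argue purely combinatorially. Given any independent set $W$ of $G$, either $v\notin W$, in which case $W$ is an independent set of $G\setminus v$, or $v\in W$, in which case $W\setminus\{v\}$ is an independent set of $G\setminus N[v]$ (since no neighbor of $v$ can lie in $W$), giving $|W|\le\dim(G\setminus N[v])+1$. Hence $\alpha(G)\le\max\{\dim(G\setminus N[v])+1,\dim(G\setminus v)\}$. Conversely, any independent set of $G\setminus v$ is an independent set of $G$, and adding $v$ to any independent set of $G\setminus N[v]$ yields an independent set of $G$; this gives the reverse inequality. Note this part needs no chordality.

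For part (2), by Auslander--Buchsbaum it suffices to prove the dual statement $\pdim G=\max\{\pdim(G\setminus N[v_n]),\ \pdim(G\setminus v_n)+ ?\}$ after bookkeeping the number of variables, or — more cleanly — to work directly with minimal vertex covers via Theorem~\ref{thm:K2}. Write $v=v_n$; since $v_n$ is the first vertex in the perfect elimination ordering, $N[v]$ is a clique, and $G\setminus v$ and $G\setminus N[v]$ are again chordal (an induced subgraph of a chordal graph is chordal). I would analyze a minimal vertex cover $C$ of $G$ of maximum cardinality: if $v\in C$, then $C\setminus\{v\}$ contains a vertex cover of $G\setminus v$; if $v\notin C$, then $N(v)\subseteq C$ and $C\setminus N(v)$ relates to a vertex cover of $G\setminus N[v]$. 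Combining with Theorem~\ref{thm:K2} and matching up the polynomial rings (the ambient ring for $G\setminus v$ has $n-1$ variables, that for $G\setminus N[v]$ has $n-1-\deg v$ variables, so the $\pdim$ bookkeeping converts to the clean $\dep$ statement with the $+1$ on the $N[v_n]$ term and no shift on the $G\setminus v_n$ term), one obtains the claimed formula. The subtlety is that the maximum of $|C|$ over minimal vertex covers of $G$ does not obviously split as a clean max/min over the two subgraphs unless one uses that $N[v_n]$ is a clique, which forces every vertex cover to omit at most one vertex of $N[v_n]$; this is precisely where the perfect elimination ordering hypothesis is used, and it is the main obstacle to make rigorous.

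I expect the hard part to be the careful translation between the three polynomial rings (of sizes $n$, $n-1$, $n-1-\deg v_n$) when passing from the minimal-vertex-cover/$\pdim$ language back to $\dep$, together with verifying that the clique structure of $N[v_n]$ really does give the $\min$ (rather than some larger combination). I would handle this by first establishing the inequality $\dep G\le\min\{\dep(G\setminus N[v_n])+1,\dep(G\setminus v_n)\}$, which is essentially formal from the fact that a maximum-size minimal vertex cover of either subgraph extends to one of $G$ (adding $v_n$ in the $N[v_n]$ case), and then the reverse inequality by a case analysis on whether $v_n$ lies in a chosen maximum minimal vertex cover of $G$, using the clique property of $N[v_n]$ to control the case $v_n\notin C$.
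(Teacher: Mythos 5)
Your plan matches the paper's proof essentially step for step: part (1) is the same case analysis on whether a maximum independent set contains $v$, and part (2) is the same argument via Theorem~\ref{thm:K2}, analyzing a maximum-size minimal vertex cover $C$ according to whether $v_n\in C$ (giving a cover of $G\setminus v_n$) or $v_n\notin C$ (where the clique property of $N[v_n]$ forces $N(v_n)\subseteq C$, giving a cover of $G\setminus N[v_n]$), followed by the Auslander--Buchsbaum bookkeeping across the three ambient rings. The only cosmetic difference is that you organize part (2) as two inequalities while the paper runs a (light) induction on $|V(G)|$ and cites Theorem~\ref{thm:K2} to see that non-maximum minimal vertex covers cannot spoil the $\min$; the substance is the same.
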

\begin{proof}
(1) Let $W$ be an independent set of $G$ with $|W|=\alpha(G)$. 
Then the required equality directly follows from the following observations: 
\begin{itemize}
\item If $v \in W$, then $W \setminus \{v\} \subset V(G \setminus N[v])$ becomes an independent set of $G \setminus N[v]$ 
with $|W \setminus \{v\}|=\alpha(G \setminus N[v])$. 
\item If $v \not\in W$, then $W \subset V(G \setminus v)$ becomes an independent set of $G \setminus v$ with $|W|=\alpha(G \setminus v)$. 
\end{itemize}

\medskip

\noindent
(2) We prove the equality by induction on $|V(G)|$. 

Let $v=v_n$ and $\deg v=m$. Let $C$ be a minimal vertex cover of $G$ with $|C|=\pdim G$ (cf. Theorem~\ref{thm:K2}). 
\begin{itemize}
\item If $v \in C$, then $C'=C \setminus \{v\}$ becomes a minimal vertex cover of $G \setminus v$ with $\pdim (G \setminus v)=|C'|$. 
Hence, by the hypothesis of induction, we see that \begin{align*}
\dep G&=|V(G)|-\pdim G=|V(G)|-(|C'|+1)=|V(G \setminus v)| - |C'| \\
&= |V(G \setminus v)| - \pdim (G \setminus v) = \dep (G \setminus v). 
\end{align*}
\item If $v \not\in C$, since $N[v]$ forms a clique by definition of perfect elimination ordering, we see that $N(v) \subset C$. 
Thus, $C''=C \setminus N[v]$ becomes a minimal vertex cover of $G \setminus N[v]$ with $\pdim (G \setminus N[v])=|C''|$. 
Hence, we see that \begin{align*}
\dep G&=|V(G)|-\pdim G=|V(G)|-(|C''|+m)=(|V(G \setminus N[v])|+1) - |C''| \\
&= |V(G \setminus N[v])| - \pdim (G \setminus N[v])+1 = \dep (G \setminus N[v])+1. 
\end{align*}
\end{itemize}
Here, Theorem~\ref{thm:K2} says that if $C_0$ is a minimal vertex cover with $|C_0| \neq \pdim G$, then $|C_0|<\pdim G$. 
Thus, the required equality follows. 
\end{proof}

Now, we prove the inclusion $\gddcc(n) \subset C^*(n)$. Take $(a,b) \in \gddcc(n)$ arbitrarily. 
Then there is a chordal graph $G$ such that $(\dep G,\dim G)=(a,b)$. 
To prove $(a,b) \in C^*(n)$, it is enough to show that \begin{align}\label{goal}b \leq (n-b)(b-a+1). \end{align}
In fact, \eqref{goal} is equivalent to the following: 
\begin{align*}
\frac{b}{n-b} \leq b-a+1 \;\Longleftrightarrow\; \left\lceil\frac{b}{n-b}\right\rceil \leq b-a+1 \;\Longleftrightarrow\; 
a \leq b+1 - \left\lceil \frac{b}{n-b} \right\rceil. 
\end{align*}
This means that $(a,b) \in C^*(n)$. 

Let $v_n,\ldots,v_1$ be a perfect elimination ordering of $G$. We prove the inequality \eqref{goal} by induction on $n$. 

Let $v=v_n$ and let $$a'=\dep (G \setminus v), \;\; b'=\dim (G \setminus v), \;\; a''=\dep (G \setminus N[v]) \text{ and } b''=\dim (G \setminus N[v]).$$ 
By Lemma~\ref{lem:chordal} (1), either $b = b'$ or $b = b''+1$ holds. 
\begin{itemize}
\item Assume that $b = b'$. Since $a \leq a'$ holds by Lemma~\ref{lem:chordal} (2), we see from the hypothesis of induction that 
\begin{align*}
b=b' \leq ((n-1)-b')(b'-a'+1) \leq (n-b)(b-a+1). 
\end{align*}
\item Assume that $b = b''+1$. Since $a \leq a''+1$, we see that 
\begin{align*}
b&=b''+1 \leq ((n-\deg v -1)-b'')(b''-a''+1)+1 \\
&\leq (n-\deg v - b)(b-a+1)+1 = (n- b)(b-a+1) -(\deg v(b-a+1)-1) \\
&\leq (n-b)(b-a+1). 
\end{align*}
\end{itemize}


\bigskip

\end{document}